\documentclass[a4paper,12pt]{article}
\usepackage{amsmath,amssymb,amsthm,mathtools,geometry,hyperref,bm,cite,graphicx}
\hypersetup{colorlinks=true,linkcolor=blue,urlcolor=blue,hypertexnames=false}
\usepackage{algorithm}
\usepackage{algpseudocode}
\usepackage{placeins}
\theoremstyle{plain}
\newtheorem{theorem}{Theorem}[section]
\newtheorem{lemma}{Lemma}[section]
\newtheorem{proposition}{Proposition}[section]

\theoremstyle{definition}
\newtheorem{definition}{Definition}[section]

\theoremstyle{remark}
\newtheorem{remark}{Remark}[section]

\newcommand{\R}{\mathbb R}
\newcommand{\eps}{\varepsilon}
\newcommand{\norm}[1]{\left\lVert #1 \right\rVert}
\newcommand{\Ker}{\operatorname{Ker}}
\newcommand{\Ran}{\operatorname{Ran}}
\newcommand{\diag}{\operatorname{diag}}
\newcommand{\Span}{\operatorname{span}}

\newcommand{\bx}{\bm{x}}
\newcommand{\bv}{\bm{v}}
\newcommand{\bq}{\bm{q}}
\newcommand{\bp}{\bm{p}}
\newcommand{\br}{\bm{r}}
\newcommand{\bu}{\bm{u}}
\newcommand{\by}{\bm{y}}
\newcommand{\bz}{\bm{z}}
\newcommand{\bY}{\bm{Y}}
\newcommand{\bZ}{\bm{Z}}
\newcommand{\boldeta}{\bm{\eta}}

\newcommand{\bw}{\bm{w}}

\newcommand{\bB}{\bm{B}}
\newcommand{\bE}{\bm{E}}
\newcommand{\bF}{\bm{F}}
\newcommand{\bG}{\bm{G}}
\newcommand{\bH}{\bm{H}}
\newcommand{\bN}{\bm{N}}
\newcommand{\bR}{\bm{R}}
\newcommand{\bxi}{\bm{\xi}}
\newcommand{\bchi}{\bm{\chi}}
\newcommand{\bdelta}{\bm{\delta}}
\newcommand{\bzero}{\bm{0}}
\newcommand{\be}{\bm{e}}
\newcommand{\balpha}{\bm{\alpha}}
\newcommand{\bbeta}{\bm{\beta}}

\newcommand{\bPsi}{\bm{\Psi}}
\newcommand{\bXi}{\bm{\Xi}}
\newcommand{\cI}{\mathcal I}
\newcommand{\mE}{\mathcal E}

\title{A Curve-Reference Exponential Integrator for Three-Dimensional Charged-Particle Dynamics under a Strong Nonconstant Magnetic Field}
\author{Zhirui Shen and Bin Wang}
\date{}

\begin{document}
\maketitle

\begin{abstract}
We study a three-dimensional charged-particle dynamics model in the nonrelativistic momentum formulation and construct a curve-reference exponential integrator (CREI) for this system. Since the zero eigenvalue of linear section is not semisimple,  the system after usual change of variables does not meet the spectral condition used in the classical locally linearized extended exponential integrator (LLEEI). A kernel-range decomposition of the dominant magnetic matrix and a further linear transformation reduce the equation to a system with a semisimple zero block and one constant skew-symmetric oscillatory block.

The CREI expands the nonlinear term along the exact oscillatory curve rather than at a fixed point. Retaining monomials through degree \(k\) defines CREI\((k+1)\). We prove uniform local and global error bounds and present reproducible numerical experiments for convergence in \(h\) and uniformity in \(\eps\).
\end{abstract}

\section{Introduction}

The three-dimensional charged-particle dynamics (CPD) in a strong nonconstant magnetic field is a standard model of highly oscillatory ordinary differential equations. In the nonrelativistic case, a representative model in the highly oscillatory regime is
\begin{align*}
    {\bx}'(t)&=\bv(t),\\
    {\bv}'(t)&=\bv(t)\times\bB_{\eps}(\bx)+\bE(\bx),\quad 0<\eps\le1,
\end{align*}
where \(\bx,\bv\in\R^3\), \(\bE\) is a bounded electric field, and \(\bB_{\eps}\) is a strong spatially varying magnetic field. We separate \(\bB_{\eps}\) into a dominant constant part \(\eps^{-1}\bB_0\neq\mathbf{0}\) and a bounded spatial perturbation \(\bB_1(\bx)\), and obtain
\begin{equation}\label{eq:cpd}
    \begin{aligned}
        {\bx}'(t)&=\bv(t),\\
        {\bv}'(t)&=\bv(t)\times(\eps^{-1}\bB_0)+\bv(t)\times\bB_1(\bx)+\bE(\bx).
    \end{aligned}
\end{equation}
This form also covers the maximal-ordering field \(\bB(\eps\bx)/\eps\). For a fixed point \(\bx_0\), write
\[
\frac1\eps\bB(\eps\bx)
=\frac1\eps\bB(\eps\bx_0)+\bB_{1,\eps}(\bx).
\]
Although \(\bB_{1,\eps}\) depends on \(\eps\), it is uniformly bounded on the region of interest when \(\bB\) is smooth. The proof uses only the corresponding uniform bounds, so the error constants remain independent of \(\eps\).

For simplicity the charge-to-mass factor has been absorbed into the fields.
The parameter \(\eps\) measures the inverse strength of the principal magnetic field. When \(\eps\) is small, the Lorentz force generated by \(\eps^{-1}\bB_0\) produces rapid rotations on the time scale \(O(\eps)\), whereas the electric force and the lower-order magnetic perturbation act on an \(O(1)\) time scale. This simultaneous presence of a short gyromotion scale and a long transport scale is one of the basic numerical difficulties in plasma physics, particle methods, and geometric simulation. The same computational issue appears in many other highly oscillatory models, including second-order oscillatory equations, molecular dynamics, wave-type equations in singular limits, and oscillatory Hamiltonian systems\cite{cohenJahnkeLorenzLubich2006,hairerLubichWanner2013,wuYouWang2013}.

Classical one-step methods are usually not appropriate in this regime unless the time step resolves the fast period. For explicit Runge--Kutta, Runge--Kutta--Nystrom, and related polynomial approximation methods, the local truncation analysis involves time derivatives of the exact solution, and these derivatives typically grow like negative powers of \(\eps\). This leads to error constants that deteriorate as \(\eps\to0\). Even when a method is stable, the accuracy may fail to be uniform in the oscillatory parameter. This observation motivated a long line of methods that treat oscillatory linear components more carefully. Gautschi's trigonometric-polynomial idea \cite{gautschi1961} and later Gautschi-type and trigonometric integrators \cite{hochbruckLubich1999,grimm2005,grimmHochbruck2006} were designed to incorporate exact or filtered oscillatory information into the numerical step. Modulated Fourier expansions provide a complementary analytical framework for long-time behavior and conservation properties of highly oscillatory systems \cite{hairerLubich2000,cohenHairerLubich2003,cohenHairerLubich2005,cohen2006}. For charged particles in strong magnetic fields, filtered Boris-type algorithms and geometric two-scale integrators have shown that carefully designed schemes can capture the physically relevant slow behavior without resolving every fast turn \cite{hairerLubichWang2020,wangZhao2023}.

The past two years have seen a continued effort to design exponential integrators whose error bounds are uniform with respect to the oscillatory parameter. Exponential integrators are natural candidates because the variation-of-constants formula allows the stiff linear part to be integrated exactly; the general theory and implementation of these methods are treated in \cite{coxMatthews2002,krogstad2005,hochbruckOstermannSchweitzer2009,caliariOstermann2009,hochbruckOstermann2010}. For oscillatory second-order systems, exponential Runge--Kutta--Nystrom methods, trigonometric collocation methods, and Filon-type constructions exploit the same principle in different forms \cite{wuYouShiWang2010,wangWuXia2013,wangLiuWu2013}. Recent work has also refined uniform bounds for exponential and parareal-type methods in oscillatory regimes \cite{wangJiang2024}. At the same time, uniformly accurate multiscale time integrators (EIs), two-scale formulations, stroboscopic averaging, heterogeneous multiscale methods, and nested Picard-type schemes have supplied powerful alternatives for problems with clear scale separation \cite{baoDongZhao2014,zhao2017,caiGuo2021,e2003,abdulleEEngquistVandenEijnden2012,calvoSanzSerna2010,chartierMakazagaMuruaVilmart2014,chartierLemouMehatsVilmart2020,chartierCrouseillesZhao2018,crouseillesLemouMehatsZhao2017}. These methods differ substantially in construction, but they share a common goal: the step size and the error estimate should not degrade merely because the solution contains a fast but structurally simple oscillation.

The locally linearized extended exponential integrator (LLEEI) was introduced by Qi, Deng, and Zhu in \cite{qiDengZhu2025} for highly oscillatory equations with a dominant linear part. At each step, classical LLEEI fixes the current numerical value as a reference point, Taylor expands the nonlinear term at that point, augments the state by the retained monomials, and advances the resulting finite linear system by an exponential. This construction keeps the dominant oscillation inside the exponential and obtains higher order by increasing the polynomial degree. Related local-linear-extension ideas were subsequently applied to charged-particle dynamics in \cite{wangWangZhu2026}. In particular, \cite{qiDeng2026} studies the standard two-dimensional CPD with a uniform magnetic field perpendicular to the plane. In contrast, \eqref{eq:cpd} is three-dimensional and includes both motion parallel to the dominant field and the nonlinear magnetic term \(\bv\times\bB_1(\bx)\).

The fixed-point reference is restrictive when \(h/\eps\) is not small. Even for the homogeneous equation, the displacement \((e^{(t-t_n)A/\eps}-I)\bu_n\) need not be \(O(h)\); consequently, a Taylor expansion at \(\bu_n\) does not yield a uniform remainder estimate. The present paper replaces the fixed point by the homogeneous curve \(e^{(t-t_n)A/\eps}\bu_n\). The resulting method is called the curve-reference exponential integrator (CREI). The monomial extension and projection are inherited from LLEEI, while the moving reference removes the known rapid oscillation before the Taylor expansion is formed.

For the three-dimensional system \eqref{eq:cpd}, a second obstruction appears before CREI can be applied. In the usual position-momentum variables, the zero eigenvalue of the dominant matrix is not semisimple, so the exponential of its monomial extension can grow polynomially. Let \(\mathcal B\bw=\bw\times\bB_0\). We use the decomposition \(\R^3=\Ker\mathcal B\oplus\Ran\mathcal B\) and the charged-particle transformation
\[
\bq=\bx,
\qquad
\bp=2\eps\bv-\mathcal B\bq,
\]
which is the form suggested by the Hamiltonian-like representation of magnetic rotation. After a series of technical changes of variables, we can transform the equation with a highly oscillatory factor into a simple form
\[
\dot{\by}=\bF(\by,\boldeta),
\qquad
\dot{\boldeta}=\eps^{-1}\Omega\boldeta+\bG(\by,\boldeta).
\]
The zero block of the transformed dominant matrix is semisimple, and its only nonzero block is the constant skew-symmetric matrix \(\Omega\). This is the algebraic structure required for uniformly bounded extended exponentials.

In the nonrelativistic model, the frequency generated by \(\Omega/\eps\) is constant. Hence the CREI reference is the exact homogeneous rotation \(e^{(t-t_n)\Omega/\eps}\boldeta_n\), without phase prediction. We prove the smoothness of the transformed nonlinear term, the boundedness of the extended oscillatory exponential, and global \(O(h^k)\) error bounds. Section \ref{sec:lleei} recalls LLEEI, defines CREI, and derives the transformed CPD system. Sections \ref{sec:small} and \ref{sec:large} establish the error estimates. Section \ref{sec:numerics} reports the numerical experiments.

\section{From LLEEI to CREI and Preparatory Transformations}\label{sec:lleei}

We first fix the ordering and projection used for monomial extensions. We then recall classical LLEEI and define CREI before transforming the charged-particle equation into the form used in the analysis.

\subsection{Multi-indices and ordering}

In the following, we assume \(\bu=(u_1,\ldots,u_d)^\top\in\R^d\) and \(\balpha\in\mathbb N^d\). we first define the multi-indices and ordering.

\begin{definition}\label{def:ordering}
For \(k\ge0\), let
\begin{itemize}
    \item Multi-indices:\[
    |\balpha|=\alpha_1+\cdots+\alpha_d,\qquad\cI_d^k=\{\balpha\in\mathbb N^d:\ |\balpha|\le k\}.\]
    Conventionally, for $k=0$, we define the \(\cI_d^0\) as the singleton set containing only the empty index with no components.
    \item Standard ordering: 
    We order \(\mathcal I_d^k\) by the graded lexicographic order.
    For \(\balpha,\bbeta\in\mathcal I_d^k\), we write
    \[
    \balpha\prec\bbeta
    \]
    if either
    \[
    |\balpha|<|\bbeta|,
    \]
    or
    \[
    |\balpha|=|\bbeta|
    \quad\text{and}\quad
    \exists\, r\in\{1,\ldots,d\}\ \text{s.t.}\ 
    \alpha_1=\beta_1,\ldots,\alpha_{r-1}=\beta_{r-1},
    \quad
    \alpha_r<\beta_r .
    \]
    Thus the indices are first sorted by total degree and then lexicographically
    within each fixed degree. For example, when \(d=2\) and \(k=3\), this ordering gives
    \[
    \begin{aligned}
    \mathcal I_2^3
    =
    \{&
    (0,0),\\
    &(0,1),(1,0),\\
    &(0,2),(1,1),(2,0),\\
    &(0,3),(1,2),(2,1),(3,0)
    \}.
    \end{aligned}
    \]
    \item Unit multi-indices:\[
     (\be_j)_\ell=
     \begin{cases}
     1, & \ell=j,\\
     0, & \ell\ne j,
     \end{cases}
     \qquad \ell=1,\ldots,d.
     \]
    \item Standard exponent-vector multi-index notation:
    \[
    \balpha!=\alpha_1!\cdots\alpha_d!,\qquad
    \bu^{\balpha}=\prod_{j=1}^d u_j^{\alpha_j}.
    \]
    \item Projection operator: Suppose $i\le j$,
     \[
     \Pi_i^j\bm u=(u_i,u_{i+1}\ldots,u_j)^\top.
     \]
    For the extended vectors used in this paper, the ordered basis starts with the constant monomial, so the constant component is \(\Pi_1^1\). In the six-dimensional transformed CPD variables, the degree-one monomial block is \(\Pi_2^7\), etc..
    \item Partial differential operator:
    \[
    \frac{\partial^\alpha}{\partial \bx^{\balpha}}:=
     \begin{cases}
     \mathrm{id}, & \text{if } |\balpha| = 0, \\
     \frac{\partial^j}{\partial x_{\alpha_1} \cdots \partial x_{\alpha_j}}, & \text{if } |\balpha| = j \geq 1.
     \end{cases}
    \]
\end{itemize}
\end{definition}

For the exponent-vector notation used below, we write
\[
\partial^{\balpha}f(\bu)
:=
\frac{\partial^{|\balpha|}f(\bu)}
{\partial u_1^{\alpha_1}\cdots\partial u_d^{\alpha_d}},
\qquad
\partial^{\bzero}f=f.
\]

\subsection{Classical LLEEI}\label{sec:classical-lleei}

The locally linearized extended exponential integrator (LLEEI) was introduced
by Qi, Deng, and Zhu in \cite{qiDengZhu2025} for equations of the form
\begin{equation}\label{eq:general-lleei}
\dot{\bu}=\frac1\eps A\bu+\bF(\bu),\qquad \bu(t)\in\R^d.
\end{equation}
Its purpose is to keep the action of the dominant matrix \(A/\eps\) inside an
exponential while representing the local nonlinear contribution by a finite
linear system. Fix the step \([t_n,t_{n+1}]\), where \(n\) is the time-step
index, and let \(\bu_n\) be the numerical value at \(t_n\). A classical LLEEI
step consists of four operations:
\begin{enumerate}
    \item use \(\bu_n\) as a fixed reference and set \(\bxi_n=\bu-\bu_n\);
    \item replace \(\bF(\bu_n+\bxi_n)\) by its Taylor polynomial of degree \(r\);
    \item introduce all monomials \(\bxi_n^{\bbeta}\), \(\bbeta\in\cI_d^r\), and
    differentiate them to obtain a finite linear system;
    \item advance that system and recover \(\bxi_n\) by \(\Pi_2^{d+1}\).
\end{enumerate}
Indeed,
\[
\bF(\bu_n+\bxi_n)=
\sum_{|\balpha|\le r}
\frac{\partial^{\balpha}\bF(\bu_n)}{\balpha!}\bxi_n^{\balpha}
+\bR_n^{[r+1]}(\bxi_n).
\]

The reason for collecting monomials is that the truncated right-hand side is
polynomial in \(\bxi_n\), but linear in the list of its monomials. After
differentiating this list and discarding degrees larger than \(r\), one obtains
a closed finite linear system. Its exponential can be used directly, and the
original deviation is recovered by projection. This is the central extension
mechanism of LLEEI \cite{qiDengZhu2025}.

For example, let \(d=1\), \(r=2\), and suppose the truncated deviation equation
is
\[
\dot\xi_n=g_{0,n}+g_{1,n}\xi_n+g_{2,n}\xi_n^2,
\qquad \xi_n(t_n)=0.
\]
If \(A=a\in\R\), then
\[
g_{0,n}=\frac{a}{\eps}u_n+F(u_n),\qquad
g_{1,n}=\frac{a}{\eps}+F'(u_n),\qquad
g_{2,n}=\frac12F''(u_n).
\]
With \(\bXi_n^{[2]}=(1,\xi_n,\xi_n^2)^\top\), differentiation gives
\[
\frac{d}{dt}\bXi_n^{[2]}
=
\begin{pmatrix}
0&0&0\\
g_{0,n}&g_{1,n}&g_{2,n}\\
0&2g_{0,n}&2g_{1,n}
\end{pmatrix}
\bXi_n^{[2]}.
\]
The omitted term is \(2g_{2,n}\xi_n^3\), whose degree exceeds \(r=2\).
Thus a quadratic scalar equation has become a three-dimensional linear system,
and \(\Pi_2^2\bXi_n^{[2]}=\xi_n\).

In general, after the Taylor remainder is omitted, the ordered monomial vector
\[
\bXi_n^{[r]}(t)=
\bigl(\bxi_n(t)^{\bbeta}\bigr)_{\bbeta\in\cI_d^r},\qquad
\bXi_n^{[r]}(t_n)=(1,0,\ldots,0)^\top,
\]
satisfies a finite linear equation
\[
\dot{\bXi}_n^{[r]}=M_n^{[r]}\bXi_n^{[r]},
\qquad
\Pi_2^{d+1}\bXi_n^{[r]}=\bxi_n.
\]
Here \(M_n^{[r]}\) is the coefficient matrix generated on the \(n\)-th step
by differentiating the retained monomials. The index \(n\) records the
dependence on the reference value \(\bu_n\), while \([r]\) records the highest
retained monomial degree

The fixed reference is the limitation relevant here. For the homogeneous
solution \(e^{(t-t_n)A/\eps}\bu_n\), the displacement from \(\bu_n\) is
\((e^{(t-t_n)A/\eps}-I)\bu_n\), which need not be \(O(h)\) when
\(h/\eps\) is not small. Thus the Taylor remainder at \(\bu_n\) is not
uniformly controlled by a power of \(h\). Moreover, the dominant matrix of the
untransformed three-dimensional CPD system has a non-semisimple zero block, so
its extended exponential does not satisfy the boundedness condition used in
the LLEEI analysis.

\subsection{Curve-reference exponential integrator}\label{sec:crei}

CREI changes only the reference used before the Taylor expansion. For
\eqref{eq:general-lleei}, define the homogeneous curve and the deviation by
\[
\widehat{\bu}_n(t)=e^{(t-t_n)A/\eps}\bu_n,
\qquad
\bdelta_n(t)=\bu(t)-\widehat{\bu}_n(t).
\]
Then
\begin{equation}\label{eq:crei-delta-exact}
\dot{\bdelta}_n=\frac1\eps A\bdelta_n+
\bF\bigl(\widehat{\bu}_n(t)+\bdelta_n\bigr),
\qquad \bdelta_n(t_n)=\bzero.
\end{equation}
The homogeneous curve contains the known motion generated by \(A/\eps\).
Consequently, the Taylor variable \(\bdelta_n\) measures only the correction
produced by \(\bF\), rather than the full displacement from the fixed point
\(\bu_n\). This is the step that makes the remainder estimate uniform when the
homogeneous solution travels far from \(\bu_n\).

For an integer \(r\ge1\), Taylor expansion along the curve gives
\begin{equation}\label{eq:crei-taylor}
\bF\bigl(\widehat{\bu}_n(t)+\bdelta\bigr)=
\sum_{|\balpha|\le r}
\bm{c}_{\balpha}^{(n)}(t)\bdelta^{\balpha}
+\bR_n^{[r+1]}(t,\bdelta),
\qquad
\bm{c}_{\balpha}^{(n)}(t)=
\frac{\partial^{\balpha}\bF(\widehat{\bu}_n(t))}{\balpha!}.
\end{equation}
Unlike classical LLEEI, these Taylor coefficients depend on \(t\), because the
reference position moves along \(\widehat{\bu}_n(t)\). The monomial extension
itself is unchanged. In the scalar quadratic example above, one only replaces
\(g_{j,n}\) by \(g_{j,n}(t)\); the same three monomials
\((1,\delta_n,\delta_n^2)\) then satisfy a time-dependent linear system.

After omitting the remainder, set
\[
\bZ_n^{[r]}(t)=
\bigl(\bdelta_n(t)^{\bbeta}\bigr)_{\bbeta\in\cI_d^r},
\qquad
\bZ_n^{[r]}(t_n)=(1,0,\ldots,0)^\top.
\]
For \(1\le|\bbeta|\le r\), monomial differentiation yields
\begin{align}
\frac{d}{dt}\bdelta_n^{\bbeta}
&=\frac1\eps\sum_{j,\ell=1}^d
\beta_jA_{j\ell}
\bdelta_n^{\bbeta-\be_j+\be_\ell}\nonumber\\
&\quad+
\sum_{j=1}^d\beta_j
\sum_{\substack{|\balpha|\le r\\|\bbeta|-1+|\balpha|\le r}}
c_{j,\balpha}^{(n)}(t)
\bdelta_n^{\bbeta-\be_j+\balpha},
\label{eq:crei-extended-components}
\end{align}
where terms with a negative multi-index component are omitted. Hence
\[
\dot{\bZ}_n^{[r]}(t)=\widehat M_n^{[r]}(t)\bZ_n^{[r]}(t),
\qquad
\Pi_2^{d+1}\bZ_n^{[r]}(t)=\bdelta_n(t).
\]
Here \(\widehat M_n^{[r]}(t)\) is determined by
\eqref{eq:crei-extended-components}. Again, \(n\) is the time-step index and
\([r]\) is the retained degree; the hat distinguishes the moving-reference
matrix from the fixed-reference matrix \(M_n^{[r]}\) in Section
\ref{sec:classical-lleei}. Let \(\widehat\Phi_n^{[r]}(t,s)\) be its
state-transition matrix, defined by
\[
\partial_t\widehat\Phi_n^{[r]}(t,s)
=\widehat M_n^{[r]}(t)\widehat\Phi_n^{[r]}(t,s),
\qquad
\widehat\Phi_n^{[r]}(s,s)=I.
\]
One CREI step is therefore
\begin{equation}\label{eq:crei-step}
\bu_{n+1}=e^{hA/\eps}\bu_n+
\Pi_2^{d+1}\widehat\Phi_n^{[r]}(t_{n+1},t_n)
(1,0,\ldots,0)^\top.
\end{equation}

The method retaining all monomials of degree at most \(r\) is denoted by
CREI\((r+1)\). In particular, retaining the Taylor expansion through degree
\(k\) defines CREI\((k+1)\); CREI2, CREI3, and CREI4 retain degrees
\(1,2,3\), respectively. In the error analysis below, CREI\((k)\) therefore
uses degree \(k-1\). The difference from classical LLEEI is precisely the
replacement of the fixed point \(\bu_n\) by the homogeneous curve
\(\widehat{\bu}_n(t)\).

\subsection{Change of variables}\label{sec:qp}
Note that the CPD \eqref{eq:cpd} is not in canonical form. So in order to solve it in our context, we introduce a change of variable
\begin{equation}\label{eq:qp-transform}
\bq=\bx,\qquad \bp=2\eps\bv-\mathcal B\bq,
\end{equation}
and
\begin{equation}\label{eq:v-inverse}
\bv=\frac{\bp+\mathcal B\bq}{2\eps}.
\end{equation}
Define
\[
\bH(\bq,\bp)=\bE(\bq)+\frac{\bp+\mathcal B\bq}{2\eps}\times\bB_1(\bq).
\]

\begin{proposition}\label{prop:qp-system}
Under the change of variables \eqref{eq:qp-transform} and \eqref{eq:v-inverse}, the system \eqref{eq:cpd} is equivalent to
\begin{equation}\label{eq:qp-matrix}
\frac{d}{dt}\binom{\bq}{\bp}
=\frac1{2\eps}
\begin{pmatrix}
\mathcal B&I\\
\mathcal B^2&\mathcal B
\end{pmatrix}
\binom{\bq}{\bp}
+\binom{\bzero}{2\eps\bH(\bq,\bp)}.
\end{equation}
Equivalently, in components,
\begin{equation}\label{eq:qp-system}
\dot{\bq}=\frac1{2\eps}(\mathcal B\bq+\bp),\qquad
\dot{\bp}=\frac1{2\eps}(\mathcal B^2\bq+\mathcal B\bp)+2\eps\bH(\bq,\bp).
\end{equation}
\end{proposition}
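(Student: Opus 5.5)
The plan is a direct substitution: differentiate the new variables \eqref{eq:qp-transform} along a solution of \eqref{eq:cpd} and rewrite every occurrence of \(\bv\) with \eqref{eq:v-inverse}. Equivalence then follows because the map \((\bx,\bv)\mapsto(\bq,\bp)\) is linear and invertible for every fixed \(\eps>0\). Its inverse is \(\bx=\bq\), \(\bv=(\bp+\mathcal B\bq)/(2\eps)\). Hence solutions correspond one-to-one, and smoothness is preserved.

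\emph{Step 1 (the \(\bq\)-equation).} Since \(\bq=\bx\), we get \(\dot\bq=\bv=\frac1{2\eps}(\mathcal B\bq+\bp)\) by \eqref{eq:v-inverse}. This is the first row of \eqref{eq:qp-matrix}.

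\emph{Step 2 (the \(\bp\)-equation).} Differentiating \(\bp=2\eps\bv-\mathcal B\bq\) gives \(\dot\bp=2\eps\dot\bv-\mathcal B\dot\bq\). The key observation is that the dominant Lorentz term is \(\bv\times(\eps^{-1}\bB_0)=\eps^{-1}\mathcal B\bv\), by the definition \(\mathcal B\bw=\bw\times\bB_0\). The remaining terms \(\bv\times\bB_1(\bx)+\bE(\bx)\) equal exactly \(\bH(\bq,\bp)\) once \(\bx=\bq\) and \eqref{eq:v-inverse} are inserted. Therefore
\[
\dot\bp=2\mathcal B\bv+2\eps\bH(\bq,\bp)-\mathcal B\bv=\mathcal B\bv+2\eps\bH(\bq,\bp),
\]
and substituting \(\bv=(\bp+\mathcal B\bq)/(2\eps)\) yields \(\frac1{2\eps}(\mathcal B^2\bq+\mathcal B\bp)+2\eps\bH(\bq,\bp)\). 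This is the second row. Here I use only the linearity of \(\mathcal B\), so that \(\mathcal B(\mathcal B\bq)=\mathcal B^2\bq\).

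\emph{Step 3 (converse).} Starting from a solution of \eqref{eq:qp-system}, define \(\bx=\bq\) and \(\bv\) by \eqref{eq:v-inverse}. The first equation gives \(\dot\bx=\bv\). Reversing the computation of Step 2 gives \(2\eps\dot\bv=\dot\bp+\mathcal B\dot\bq=2\mathcal B\bv+2\eps\bH\), which is the \(\bv\)-equation of \eqref{eq:cpd}.

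There is no genuine obstacle. The only point needing care is the identification \(\bv\times\bB_0=\mathcal B\bv\) together with the bookkeeping of the factor \(2\): the term \(-\mathcal B\dot\bq\) cancels half of \(2\mathcal B\bv\), and this cancellation produces the symmetric-looking block \(\frac1{2\eps}(\mathcal B^2,\ \mathcal B)\).
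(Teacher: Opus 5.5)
Your proposal is correct and follows the paper's proof essentially line by line. Like the paper, you differentiate \(\bq=\bx\) and \(\bp=2\eps\bv-\mathcal B\bq\), use \(2\eps\dot\bv=2\mathcal B\bv+2\eps\bH(\bq,\bp)\), cancel \(\mathcal B\dot\bq=\mathcal B\bv\), and substitute \eqref{eq:v-inverse}; your converse Step 3 and invertibility remark additionally justify the word ``equivalent,'' which the paper leaves implicit.
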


\begin{proof}
Differentiate \(\bq=\bx\) to get \(\dot{\bq}=\bv\), and use \eqref{eq:v-inverse}. For \(\bp\), we have
\[
\dot{\bp}=2\eps\dot{\bv}-\mathcal B\dot{\bq}.
\]
From \eqref{eq:cpd},
\[
2\eps\dot{\bv}=2\mathcal B\bv+2\eps\bH(\bq,\bp).
\]
Substituting \(\dot{\bq}=\bv\) gives
\[
\dot{\bp}=2\mathcal B\bv+2\eps\bH(\bq,\bp)-\mathcal B\bv
=\mathcal B\bv+2\eps\bH(\bq,\bp).
\]
Then using \(\bv=(\bp+\mathcal B\bq)/(2\eps)\) yields \eqref{eq:qp-system}.
\end{proof}

For a  fixed \(\bH\), the dominant matrix in \eqref{eq:qp-system} is proportional to
\[
A_{\rm na}:=\begin{pmatrix}
\mathcal B&I\\
\mathcal B^2&\mathcal B
\end{pmatrix},
\]
This matrix still fails the spectral condition used below. Indeed, let
\(\be_0\in\Ker\mathcal B\) be a unit vector. Since
\(\mathcal B\be_0=\bzero\), we have \(A_{\rm na}(\be_0,\bzero)^\top=\bzero\) and \(A_{\rm na}(\bzero,\be_0)^\top=(\be_0,\bzero)^\top\).
Thus \((\bzero,\be_0)^\top\) is a generalized eigenvector associated with the zero eigenvalue, while \((\be_0,\bzero)^\top\) is an eigenvector. Hence the zero eigenvalue has a Jordan chain of length at least two, so the zero block of \(A_{\rm na}\) is not semisimple. This nilpotent part would produce polynomial growth in the exponential of the leading matrix and also in its monomial extensions.

To solve the above problems, we need to analyze the properties of the kernel and image of $\mathcal B$ and perform another change of variables based on this. Since \(\mathcal B\bz=\bz\times\bB_0\), one has
\[
\Ker\mathcal B=\Span\{\be_0\},
\qquad
\Ran\mathcal B=\be_0^\perp .
\]
Choose a unit vector \(\be_1\in\be_0^\perp\). Let \(\be_2=\be_1\times\be_0\), \(Q_\perp=(\be_1,\be_2)\), \(Q=(\be_0,Q_\perp)\) and denote \(\omega=\norm{\bB_0}\). Then \(Q\) is orthogonal and
\[
Q^\top\mathcal BQ
=
\begin{pmatrix}
0&0\\
0&\Omega
\end{pmatrix},
\qquad
\Omega=Q_\perp^\top\mathcal BQ_\perp .
\]
With the above orientation of \(Q_\perp\), we have \(\mathcal B\be_1=\omega\be_2\) and \(\mathcal B\be_2=-\omega\be_1\).
Hence
\[
\Omega
=
\omega
\begin{pmatrix}
0&-1\\
1&0
\end{pmatrix}.
\]
Thus the one-dimensional block \(\Ker\mathcal B\) is the zero block, while the
two-dimensional block \(\Ran\mathcal B\) is represented by the nonsingular
skew-symmetric matrix \(\Omega\).

Introduce
\begin{equation}\label{eq:r-def}
\br=\bp+\mathcal B\bq=2\eps\bv .
\end{equation}
Then \eqref{eq:qp-system} becomes
\begin{equation}\label{eq:qr-system}
\dot{\bq}=\frac1{2\eps}\br,\qquad
\dot{\br}=\frac1\eps\mathcal B\br+2\eps\bH(\bq,\br),
\end{equation}
where
\[
\bH(\bq,\br)=\bE(\bq)+\frac{\br}{2\eps}\times\bB_1(\bq).
\]
We now separate the variables according to the decomposition
\(\R^3=\Ker\mathcal B\oplus\Ran\mathcal B\), and correct the perpendicular
position by the fast variable \(\br\). Define
\begin{equation}\label{eq:new-vars}
\sigma=\be_0^\top\bq,\qquad
\nu=\frac1{2\eps}\be_0^\top\br,\qquad
\bZ=\frac12\Omega^{-1}Q_\perp^\top\br,\qquad
\boldeta=\frac{\bZ}{\eps},\qquad
\bY=Q_\perp^\top\bq-\bZ.
\end{equation}
Collecting
\[
\bu=(\sigma,\nu,\bY^\top,\boldeta^\top)^\top,\qquad
\by=(\sigma,\nu,\bY^\top)^\top,
\]
we have a proposition about the inverse map of \((\bq,\br)\mapsto(\bu,\by)\).

\begin{proposition}\label{prop:inverse}
The transformation \eqref{eq:new-vars} is invertible and
\begin{equation}\label{eq:inverse}
\bq=\be_0\sigma+Q_\perp(\bY+\eps\boldeta),\qquad
\bp=2\eps\nu\,\be_0+Q_\perp\Omega(\eps\boldeta-\bY).
\end{equation}
Moreover,
\[
\bv=\nu\be_0+Q_\perp\Omega\boldeta.
\]
\end{proposition}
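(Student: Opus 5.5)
The plan is to write the linear map $(\bq,\br)\mapsto\bu$ explicitly in the orthonormal frame $Q=(\be_0,Q_\perp)$ and invert it component by component. Throughout we use three facts already established: $Q$ is orthogonal, $Q^\top\mathcal BQ=\diag(0,\Omega)$ with $\Omega$ invertible, and $\br=\bp+\mathcal B\bq=2\eps\bv$ from \eqref{eq:r-def}. Since $\bq=Q Q^\top\bq=\be_0(\be_0^\top\bq)+Q_\perp(Q_\perp^\top\bq)$, and likewise for $\br$, it suffices to recover the four coordinates $\be_0^\top\bq$, $Q_\perp^\top\bq$, $\be_0^\top\br$ and $Q_\perp^\top\br$ from $\bu$.

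From \eqref{eq:new-vars} these coordinates come out directly:
\[
\be_0^\top\bq=\sigma,\qquad \be_0^\top\br=2\eps\nu,\qquad Q_\perp^\top\br=2\Omega\bZ=2\eps\Omega\boldeta,\qquad Q_\perp^\top\bq=\bY+\bZ=\bY+\eps\boldeta .
\]
Each formula is an explicit linear expression in $\bu$, so the map $\bu\mapsto(\bq,\br)$ defined by them is a two-sided inverse. Invertibility then holds because $\Omega$ is nonsingular and $\eps>0$. Substituting into the frame expansion gives $\bq=\be_0\sigma+Q_\perp(\bY+\eps\boldeta)$ and $\br=2\eps\nu\be_0+2\eps Q_\perp\Omega\boldeta$. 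The velocity formula follows at once from $\bv=\br/(2\eps)$.

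For $\bp=\br-\mathcal B\bq$, we compute $\mathcal B\bq$. Since $\mathcal B\be_0=\bzero$, and $\mathcal BQ_\perp=Q_\perp\Omega$ (from $Q^\top\mathcal BQ=\diag(0,\Omega)$ together with $\Ran\mathcal B=\be_0^\perp$), we get $\mathcal B\bq=Q_\perp\Omega(\bY+\eps\boldeta)$. Then
\[
\bp=2\eps\nu\be_0+2\eps Q_\perp\Omega\boldeta-Q_\perp\Omega\bY-\eps Q_\perp\Omega\boldeta=2\eps\nu\be_0+Q_\perp\Omega(\eps\boldeta-\bY),
\]
which matches \eqref{eq:inverse}.

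The only step that needs care is the identity $\mathcal BQ_\perp=Q_\perp\Omega$. The block form by itself gives only $Q_\perp^\top\mathcal BQ_\perp=\Omega$; one also needs that the $\be_0$-component of $\mathcal BQ_\perp$ vanishes, i.e.\ $\be_0^\top\mathcal BQ_\perp=0$. This holds because $\be_0^\top(\bw\times\bB_0)=0$, as $\be_0$ is parallel to $\bB_0$. It can also be read off directly from $\mathcal B\be_1=\omega\be_2$ and $\mathcal B\be_2=-\omega\be_1$. Everything else is routine substitution.
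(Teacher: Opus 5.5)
Your proposal is correct and is essentially the paper's argument. You read off $\be_0^\top\bq$, $Q_\perp^\top\bq$, $\be_0^\top\br$, $Q_\perp^\top\br$ from \eqref{eq:new-vars}, reassemble $\bq$ and $\br$ in the frame $Q$, and obtain $\bp=\br-\mathcal B\bq$ via $\mathcal B\bq=Q_\perp\Omega(\bY+\eps\boldeta)$; your explicit justification of $\mathcal BQ_\perp=Q_\perp\Omega$ (i.e.\ $\be_0^\top\mathcal BQ_\perp=0$) and of the two-sidedness of the inverse fills in steps the paper leaves implicit.
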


\begin{proof}
From \eqref{eq:new-vars}, we have \(Q_\perp^\top\bq=\bY+\bZ=\bY+\eps\boldeta\), and
\(\be_0^\top\bq=\sigma\). Hence
\[
\bq=\be_0\sigma+Q_\perp(\bY+\eps\boldeta).
\]
Also, \(Q_\perp^\top\br=2\Omega\bZ=2\eps\Omega\boldeta\), and
\(\be_0^\top\br=2\eps\nu\). Therefore
\[
\br=2\eps\nu\,\be_0+2\eps Q_\perp\Omega\boldeta,
\qquad
\bv=\frac{\br}{2\eps}
=
\nu\be_0+Q_\perp\Omega\boldeta.
\]
Finally, since \(\bp=\br-\mathcal B\bq\) and
\(\mathcal B\bq=Q_\perp\Omega(\bY+\eps\boldeta)\), we obtain
\[
\bp
=
2\eps\nu\,\be_0
+
2\eps Q_\perp\Omega\boldeta
-
Q_\perp\Omega(\bY+\eps\boldeta)
=
2\eps\nu\,\be_0+Q_\perp\Omega(\eps\boldeta-\bY).
\]
\end{proof}

\begin{remark}\label{rem:inverse-lip}
For \(0<\eps\le1\), the map
\[
(\sigma,\nu,\bY,\boldeta)\mapsto(\bq,\bp)
\]
defined in \eqref{eq:inverse} has an \(\eps\)-uniform Lipschitz constant on \(\R^6\), because the only \(\eps\)-dependent coefficients are \(\eps\) and \(2\eps\). Thus an \(O(h^k)\) error in \(\bu\) implies an \(O(h^k)\) error in \((\bq,\bp)\). If one reports the original velocity through \(\bv=(\bp+\mathcal B\bq)/(2\eps)\), the estimate becomes \(O(h^k/\eps)\) unless the transformed variables or the momenta are controlled in a stronger, \(\eps\)-weighted norm.
\end{remark}

\begin{theorem}[Transformed standard form]\label{thm:transformed}
The system \eqref{eq:cpd} is equivalent to
\begin{equation}\label{eq:transformed}
\begin{cases}
\dot\sigma=\nu,\\
\dot\nu=\be_0^\top\widehat{\bH}(\sigma,\nu,\bY,\boldeta),\\
\dot{\bY}=-\eps\Omega^{-1}Q_\perp^\top\widehat{\bH}(\sigma,\nu,\bY,\boldeta),\\
\dot{\boldeta}=\frac1\eps\Omega\boldeta+\Omega^{-1}Q_\perp^\top\widehat{\bH}(\sigma,\nu,\bY,\boldeta),
\end{cases}
\end{equation}
where
\[
\widehat{\bH}(\sigma,\nu,\bY,\boldeta)
=\bE(\bq)+\bv\times\bB_1(\bq),
\qquad
\bq=\be_0\sigma+Q_\perp(\bY+\eps\boldeta),\qquad
\bv=\nu\be_0+Q_\perp\Omega\boldeta.
\]
The zero eigenvalue of the leading matrix is semisimple, and the oscillatory block is the constant rotation \(\Omega\).
\end{theorem}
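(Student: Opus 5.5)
The plan is to differentiate the new variables \eqref{eq:new-vars} along solutions of the \((\bq,\br)\)-system \eqref{eq:qr-system}, which is equivalent to \eqref{eq:cpd} by Proposition~\ref{prop:qp-system} and \(\br=2\eps\bv\). Proposition~\ref{prop:inverse} shows the map \((\bq,\br)\mapsto\bu\) is a linear bijection, so equivalence of the two systems follows once the differential equations are derived. Two algebraic facts will be used repeatedly. First, \(\be_0^\top\mathcal B\bw=0\) for all \(\bw\): indeed \(\mathcal B^\top=-\mathcal B\), so \(\be_0^\top\mathcal B=-(\mathcal B\be_0)^\top=0\). Second, \(Q_\perp^\top\mathcal B=\Omega Q_\perp^\top\), which follows from \(Q^\top\mathcal BQ=\diag(0,\Omega)\) and the orthogonality of \(Q\). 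Throughout I write \(2\eps\bH(\bq,\br)=2\eps\widehat{\bH}\). This identification holds because, by Proposition~\ref{prop:inverse}, \(\br/(2\eps)=\bv=\nu\be_0+Q_\perp\Omega\boldeta\), and \(\bq\) has the stated form.

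The component equations then follow directly.

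For \(\sigma\): \(\dot\sigma=\be_0^\top\dot\bq=\be_0^\top\br/(2\eps)=\nu\).

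For \(\nu\): \(\dot\nu=\frac1{2\eps}\be_0^\top\bigl(\eps^{-1}\mathcal B\br+2\eps\widehat{\bH}\bigr)=\be_0^\top\widehat{\bH}\), since the \(\mathcal B\) term is annihilated.

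For \(\bZ\): \(\dot\bZ=\frac12\Omega^{-1}Q_\perp^\top\dot\br=\frac1{2\eps}\Omega^{-1}\Omega Q_\perp^\top\br+\eps\Omega^{-1}Q_\perp^\top\widehat{\bH}\). The first term equals \(\frac1{2\eps}Q_\perp^\top\br\), which is exactly \(Q_\perp^\top\dot\bq\).

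For \(\bY\): the leading term just identified cancels, giving \(\dot\bY=Q_\perp^\top\dot\bq-\dot\bZ=-\eps\Omega^{-1}Q_\perp^\top\widehat{\bH}\).

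For \(\boldeta\): \(\dot\boldeta=\dot\bZ/\eps=\frac1{2\eps^2}Q_\perp^\top\br+\Omega^{-1}Q_\perp^\top\widehat{\bH}\). From \(\bZ=\frac12\Omega^{-1}Q_\perp^\top\br\) we get \(Q_\perp^\top\br=2\Omega\bZ=2\eps\Omega\boldeta\), so the first term is \(\eps^{-1}\Omega\boldeta\).

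This yields \eqref{eq:transformed}. Uniqueness of solutions on both sides, together with the invertibility of the map, gives the equivalence.

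For the spectral claim, the leading linear part of \eqref{eq:transformed} is \(\eps^{-1}\diag(0_{4\times4},\Omega)\). The \(\dot\sigma=\nu\) coupling is \(O(1)\) and is not \(\eps^{-1}\)-scaled, so I will treat it as part of the nonlinearity \(\bF(\by,\boldeta)\), consistent with the introduction's form. The zero block is then the zero matrix, which is diagonal and hence semisimple. \(\Omega=\omega J\) has eigenvalues \(\pm i\omega\neq0\) (because \(\bB_0\ne0\)) and is constant and skew-symmetric. The only delicate point is the bookkeeping: verifying the cancellation in \(\dot\bY\) and using the commutation \(Q_\perp^\top\mathcal B=\Omega Q_\perp^\top\), which relies on \(\Ran\mathcal B=\be_0^\perp\) being invariant under \(\mathcal B\). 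I will also note explicitly why the nilpotent Jordan chain seen earlier in \(A_{\rm na}\) disappears. The new variables absorb the coupling \(\dot\bq=\br/(2\eps)\) into \(\bY+\eps\boldeta\), so no \(\eps^{-1}\) off-diagonal term remains.
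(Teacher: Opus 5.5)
Your proposal is correct and follows the paper's proof essentially step by step. You differentiate $\sigma,\nu,\bZ,\bY,\boldeta$ along the $(\bq,\br)$-system \eqref{eq:qr-system} using $\be_0^\top\mathcal B=0$ and $Q_\perp^\top\mathcal B=\Omega Q_\perp^\top$, exactly as the paper does, and you also make two points the paper leaves implicit: why the correspondence is an equivalence, and that the $O(1)$ coupling $\dot\sigma=\nu$ must be placed in $\bF$ (otherwise the $(\sigma,\nu)$ block is a nilpotent Jordan block), which is what makes the spectral claim hold for $A_0=\diag(0_{4\times4},\Omega)$.
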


\begin{proof}
From \(\dot{\bq}=(2\eps)^{-1}\br\), since \(\be_0^\top\mathcal B=0\), we have
\[
\dot\sigma=\be_0^\top\dot{\bq}=\be_0^\top\frac1{2\eps}\br=\nu.
\]
For \(\nu\),
\[
\dot\nu=\frac1{2\eps}\be_0^\top\dot{\br}
=\frac1{2\eps^2}\be_0^\top\mathcal B\br+\be_0^\top\bH(\bq,\br)=\be_0^\top\bH(\bq,\br),
\]
Similarly,
\begin{align*}
\dot{\bZ}=&\frac12\Omega^{-1}Q_\perp^\top\dot{\br}\\
=&\frac1{2\eps}\Omega^{-1}Q_\perp^\top\mathcal B\br+\eps\Omega^{-1}Q_\perp^\top\bH\\
=&\frac1\eps\Omega\bZ+\eps\Omega^{-1}Q_\perp^\top\bH.
\end{align*}
Since \(\bZ=\eps\boldeta\) and \(\eps\) is fixed in time, this gives
\[
\dot{\boldeta}=\frac1\eps\Omega\boldeta+\Omega^{-1}Q_\perp^\top\bH.
\]
Finally,
\[
\dot{\bY}=Q_\perp^\top\dot{\bq}-\dot{\bZ}
=\frac1{2\eps} Q_\perp^\top\br-\dot{\bZ}
=-\eps\Omega^{-1}Q_\perp^\top\bH.
\]
This yields \eqref{eq:transformed}.
\end{proof}

Denote \(\by=(\sigma,\nu,\bY^\top)^\top\), we could write \eqref{eq:transformed} as
\begin{equation}\label{eq:y-z}
\dot{\by}=\bF(\by,\boldeta),\qquad
\dot{\boldeta}=\frac1\eps\Omega\boldeta+\bG(\by,\boldeta),
\end{equation}
Since \(\bq=\be_0\sigma+Q_\perp(\bY+\eps\boldeta)\) and \(\bv=\nu\be_0+Q_\perp\Omega\boldeta\), the functions \(\bF\) and \(\bG\) are smooth in \((\by,\boldeta)\) with \(\eps\)-uniform derivative bounds on compact sets.

Form \eqref{eq:y-z} ensures the singular term appears only in the \(\boldeta\)-equation, and its coefficient \(\Omega\) is a constant skew-symmetric matrix. Put
\[
\bu=(\by^\top,\boldeta^\top)^\top,
\qquad
\bN(\bu)=
\binom{\bF(\by,\boldeta)}{\bG(\by,\boldeta)}
=\bigl(N_1(\bu),\ldots,N_6(\bu)\bigr)^\top.
\]
Set
\[
A_0=\diag(0_{4\times4},\Omega),
\qquad
\mE(\tau)=e^{\tau A_0/\eps}.
\]
Then \eqref{eq:transformed} can be written as
\[
\dot{\bu}
=
\frac1\eps A_0\bu+\bN(\bu).
\]
Indeed, the only \(\eps^{-1}\)-term in \eqref{eq:transformed} is
\(\eps^{-1}\Omega\boldeta\), and therefore the singular matrix is the
constant matrix \(A_0\). It does not depend on \(t\), \(n\), or \(\bu_n\). For \(\balpha\in\cI_6^k\), consider the monomial
\(\chi_{\balpha}(\bu)=\bu^{\balpha}\). The contribution of the singular part
\(\dot{\bu}=\eps^{-1}A_0\bu\) to its derivative is
\[
\frac{d}{dt}\bu^{\balpha}
=
\frac1\eps
\sum_{j=1}^6
\alpha_j
\bu^{\balpha-\be_j}
(A_0\bu)_j
=
\frac1\eps
\sum_{j=1}^6\sum_{\ell=1}^6
\alpha_j(A_0)_{j\ell}
\bu^{\balpha-\be_j+\be_\ell}.
\]
Thus, if
\[
\bu^{[k]}=
\bigl(\bu^{\balpha}\bigr)_{\balpha\in\cI_6^k},
\]
then the singular part of the monomial system has the form
\begin{equation}\label{eq:A0-extension-def}
\frac{d}{dt}\bu^{[k]}
=
\frac1\eps A_0^{[k]}\bu^{[k]}.
\end{equation}

Thanks to form \eqref{eq:y-z} the fast part \(\dot{\boldeta}=\eps^{-1}\Omega\boldeta\) can be integrated exactly, while the remaining terms \(\bF\) and \(\bG\) are uniformly smooth on compact sets as we establish in the following.

\begin{lemma}[Uniform smoothness]\label{lem:smooth}
If \(\bE\) and \(\bB_1\) are \(C^{k+1}\) and the exact and numerical solutions remain in a compact set, then all derivatives up to order \(k+1\) of \(\bF\) and \(\bG\) are uniformly bounded with respect to \(\eps\).
\end{lemma}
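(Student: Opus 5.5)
The approach is to write \(\bF\) and \(\bG\) as compositions of the fields \(\bE,\bB_1\) with affine maps whose coefficients are bounded uniformly for \(0<\eps\le1\). The chain and Leibniz rules then transfer bounds from \(\bE,\bB_1\) to \(\bF,\bG\).

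From Theorem \ref{thm:transformed}, we have
\[
\bF(\by,\boldeta)=\bigl(\nu,\ \be_0^\top\widehat{\bH},\ -\eps\Omega^{-1}Q_\perp^\top\widehat{\bH}\bigr),\qquad
\bG(\by,\boldeta)=\Omega^{-1}Q_\perp^\top\widehat{\bH},
\]
with \(\widehat{\bH}(\bu)=\bE(\bq(\bu))+\bv(\bu)\times\bB_1(\bq(\bu))\). Here \(\bq(\bu)=\be_0\sigma+Q_\perp(\bY+\eps\boldeta)\) and \(\bv(\bu)=\nu\be_0+Q_\perp\Omega\boldeta\). The matrices \(\be_0\), \(Q_\perp\), \(\Omega\), \(\Omega^{-1}=-\omega^{-2}\Omega\) are fixed, and \(\eps\le1\). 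Hence the Jacobians \(D\bq\) and \(D\bv\) are constant matrices with entries bounded independently of \(\eps\), and all their higher derivatives vanish.

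\textbf{Step 1.} I would fix a compact set \(K\subset\R^6\) containing the exact and numerical trajectories \(\bu\). Since \(\bu\mapsto\bq(\bu)\) is affine with \(\eps\)-uniformly bounded coefficients, the image \(\bq(K)\) lies in a fixed compact set \(K'\subset\R^3\) independent of \(\eps\). For instance, take \(K'\) to be the closed ball whose radius is \(\sup_K|\sigma|+\sup_K(|\bY|+|\boldeta|)\). Similarly, \(\bv(K)\) lies in a fixed bounded set, since \(|\bv|\le|\nu|+\omega|\boldeta|\).

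\textbf{Step 2.} I would apply the multivariate chain rule (Faà di Bruno) to \(\bE\circ\bq\) and \(\bB_1\circ\bq\). Because \(\bq\) is affine, this reduces to
\[
\partial^{\balpha}(\bE\circ\bq)=\sum D^{|\balpha|}\bE(\bq)[\text{columns of }D\bq],
\]
so the derivative of order \(m\le k+1\) is bounded by \(\|D\bq\|^m\sup_{K'}|D^m\bE|\). The same bound holds for \(\bB_1\).

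\textbf{Step 3.} For the product \(\bv\times\bB_1(\bq)\), I would use the Leibniz rule. Since \(\bv\) is linear, only the terms containing at most one derivative of \(\bv\) survive. Each such term is bounded by \(\sup_K|\bv|\) or \(\|D\bv\|\), multiplied by the bounds on \(\bB_1\) from Step 2. Finally, multiplying by the constant matrices \(\be_0^\top\), \(\eps\Omega^{-1}Q_\perp^\top\) and \(\Omega^{-1}Q_\perp^\top\) preserves \(\eps\)-uniformity, and the component \(\nu\) is trivially smooth. This gives the bound on all derivatives of \(\bF,\bG\) up to order \(k+1\) on \(K\).

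There is no serious analytic obstacle. The only point needing care is the choice of the compact set. The hypothesis says the solutions stay in a compact set, and this must be read as a set \(K\) in the \(\bu\)-variables that is independent of \(\eps\); otherwise uniformity fails trivially. I would state this interpretation explicitly, noting that boundedness of \(\boldeta\) corresponds to boundedness of the velocity \(\bv\) (Proposition \ref{prop:inverse}). For the maximal-ordering case, I would also remark that the bound on \(\bB_{1,\eps}\) is itself assumed uniform in \(\eps\).
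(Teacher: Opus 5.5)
Your proposal is correct and follows essentially the same route as the paper. Both write \(\widehat{\bH}\) as \(\bE\) and \(\bB_1\) composed with the affine maps \(\bq(\by,\boldeta)\), \(\bv(\by,\boldeta)\), whose Jacobians are \(\eps\)-uniformly bounded and whose higher derivatives vanish, apply Fa\`a di Bruno and Leibniz, and note that the fixed matrices and the extra factor \(\eps\) in the \(\bY\)-equation preserve the bounds; your Step 1, which places \(\bq(K)\) in an \(\eps\)-independent compact set so that the suprema of the derivatives of \(\bE\) and \(\bB_1\) are genuinely uniform, makes explicit a point the paper leaves implicit.
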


\begin{proof}
Let \(K\subset\R^6\) be a compact set containing all transformed exact and numerical values considered in the argument. Write
\[
\bq(\by,\boldeta)=\be_0\sigma+Q_\perp\bY+\eps Q_\perp\boldeta,\qquad
\bv(\by,\boldeta)=\nu\be_0+Q_\perp\Omega\boldeta .
\]
The Jacobian matrices of these affine maps are
\[
D_{\sigma}\bq=\be_0,\quad D_{\bY}\bq=Q_\perp,\quad
D_{\boldeta}\bq=\eps Q_\perp,\quad
D_{\nu}\bv=\be_0,\quad D_{\boldeta}\bv=Q_\perp\Omega ,
\]
and all second and higher derivatives of \(\bq\) and \(\bv\) vanish. Hence every derivative of \(\bq\) and \(\bv\) is bounded uniformly for \(0<\eps\le1\). More importantly, differentiation with respect to \(\boldeta\) never produces a negative power of \(\eps\); for \(\bq\) it produces the factor \(\eps Q_\perp\), and for \(\bv\) it produces the constant matrix \(Q_\perp\Omega\).

The function \(\widehat{\bH}\) appearing in \(\bF\) and \(\bG\) is
\[
\widehat{\bH}(\by,\boldeta)
=\bE(\bq(\by,\boldeta))
+\bv(\by,\boldeta)\times\bB_1(\bq(\by,\boldeta)).
\]
For a first derivative in any coordinate \(z\in\{\sigma,\nu,Y_1,Y_2,\eta_1,\eta_2\}\), the chain rule gives
\[
\partial_z\widehat{\bH}
=D\bE(\bq)\partial_z\bq
+\partial_z\bv\times\bB_1(\bq)
+\bv\times D\bB_1(\bq)\partial_z\bq .
\]
Every factor on the right-hand side is bounded on \(K\) uniformly in \(\eps\). For higher derivatives, the multivariate Fa\`a di Bruno formula reduces to finite sums involving derivatives of \(\bE\) and \(\bB_1\) up to the same order, multiplied by products of first derivatives of the affine maps \(\bq\) and \(\bv\). Since the affine maps have no higher derivatives and their first derivatives are uniformly bounded, for each multi-index \(\balpha\) with \(|\balpha|\le k+1\) there exists \(C_{\alpha}\), independent of \(\eps\), such that
\[
\sup_{(\by,\boldeta)\in K}\norm{\partial^{\alpha}\widehat{\bH}(\by,\boldeta)}\le C_{\alpha} .
\]
Finally,
\[
\bF(\by,\boldeta)=
\begin{pmatrix}
\nu\\
\be_0^\top\widehat{\bH}(\by,\boldeta)\\
-\eps\Omega^{-1}Q_\perp^\top\widehat{\bH}(\by,\boldeta)
\end{pmatrix},
\qquad
\bG(\by,\boldeta)=\Omega^{-1}Q_\perp^\top\widehat{\bH}(\by,\boldeta).
\]
Multiplication by the fixed matrices \(\be_0^\top\), \(\Omega^{-1}Q_\perp^\top\), and \(Q_\perp\) preserves the bounds, and the extra factor \(\eps\) in the \(\bY\)-equation only improves them. Therefore all derivatives of \(\bF\) and \(\bG\) up to order \(k+1\) are bounded uniformly with respect to \(\eps\).
\end{proof}

The following lemma guarantees that the matrix exponential $e^{\eps^{-1}A_0^{[k]}}$ does not grow rapidly as $\eps \to 0$, which would otherwise affect the uniform error estimates.

\begin{lemma}
\label{lem:rotation-extension}
Let \(A_0^{[k]}\) be the monomial extension defined by
\eqref{eq:A0-extension-def}. Then
\begin{equation}\label{eq:A0-extension-bound}
\sup_{0<\eps\le1}\sup_{\tau\in\R}
\norm{\exp(\tau A_0^{[k]}/\eps)}
\le C_k .
\end{equation}
\end{lemma}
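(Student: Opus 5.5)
The key observation is that the flow of the monomial system \eqref{eq:A0-extension-def} is the composition of the exact linear flow with the monomial map. For the linear equation \(\dot{\bu}=\eps^{-1}A_0\bu\) with initial value \(\bu(0)=\bu_0\), the vector \(\bu(t)^{[k]}\) solves \eqref{eq:A0-extension-def}. This holds exactly, with no truncation, because \(A_0\) is linear and differentiation of a monomial of degree \(m\) produces only monomials of the same degree \(m\). Hence
\[
\exp(tA_0^{[k]}/\eps)\,\bu_0^{[k]}=\bigl(\mE(t)\bu_0\bigr)^{[k]}
\qquad\text{for all }\bu_0\in\R^6 .
\]
Moreover \(A_0^{[k]}\) is block diagonal with respect to total degree, and \(\exp(\tau A_0^{[k]}/\eps)\) acts on the degree-\(m\) block as the induced action of \(\mE(\tau)\) on homogeneous polynomials of degree \(m\). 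Since the monomials \(\{\bu^{\balpha}\}_{|\balpha|=m}\) form a basis of these polynomials, the matrix is determined uniquely by the identity above.

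The plan is then as follows. First, note that \(\mE(\tau)=\diag(I_4,e^{\tau\Omega/\eps})\) is orthogonal for every \(\tau\in\R\), because \(\Omega\) is skew-symmetric. Put \(s=\tau/\eps\), which ranges over all of \(\R\). By the identity above, \(\exp(\tau A_0^{[k]}/\eps)=\exp(sA_0^{[k]})\), so the two suprema collapse to a single supremum over \(s\in\R\). This removes \(\eps\) from the problem entirely.

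Second, bound \(\exp(sA_0^{[k]})\) uniformly in \(s\). Equivalently, the degree-\(m\) block is the linear map on coefficient vectors induced by the substitution \(p(\bu)\mapsto p(R_s^\top\bu)\) (or \(p(R_s\bu)\), depending on convention), where \(R_s=\mE(\eps s)\) is orthogonal. The family \(\{R_s\}\) is periodic in \(s\) with period \(2\pi/\omega\), and the map \(R\mapsto\) (induced matrix on degree-\(m\) coefficients) is continuous, indeed polynomial in the entries of \(R\). A continuous function on the compact set \(O(6)\), or simply on one period \([0,2\pi/\omega]\), is bounded. Taking the maximum over \(m\le k\) then gives a constant \(C_k\) depending only on \(k\) and the chosen norm.

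An alternative explicit route is to observe that \(A_0^{[k]}\) is similar to the direct sum of symmetric tensor powers of a skew-symmetric matrix. Its eigenvalues are of the form \(i\omega(j-l)\), and it is diagonalizable, so the zero block is semisimple. Diagonalizability then gives a bounded exponential.

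The main obstacle is justifying rigorously that \(A_0^{[k]}\) has no nilpotent part, that is, that the monomial identity extends from vectors of the form \(\bu_0^{[k]}\) to the whole space. This requires that the vectors \(\bu_0^{[k]}\) span \(\R^{\dim}\), which follows from the linear independence of the monomials as functions. In the write-up I would state this spanning argument explicitly, since it is exactly what upgrades the pointwise identity to a matrix identity.
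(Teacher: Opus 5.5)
Your proof is correct, but it takes a different route from the paper.

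\textbf{The paper's route.} The paper argues spectrally. Since $A_0$ is normal, it is unitarily diagonalizable. The extension $A_0^{[k]}$ is block diagonal by degree, with eigenvalues $\sum_j\alpha_j\lambda_j\in i\R$. So $A_0^{[k]}$ is diagonalizable with imaginary spectrum, $\exp(\tau A_0^{[k]}/\eps)$ is a contraction in the norm induced by the eigenbasis, and norm equivalence gives $C_k$.

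\textbf{Your route.} You instead prove the representation identity $\exp(sA_0^{[k]})=S(e^{sA_0})$, where $S(R)$ is defined by $(R\bu)^{[k]}=S(R)\bu^{[k]}$. This is justified by uniqueness for the linear ODE together with the fact that the vectors $\bu_0^{[k]}$ span the extended space. You then bound $S(e^{sA_0})$ by continuity of the polynomial map $R\mapsto S(R)$ on a compact set, either one period $[0,2\pi/\omega]$ of the rotation or $O(6)$.

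\textbf{What each buys.} The paper's argument gives structural information: the spectrum $i\omega(\alpha_5-\alpha_6)$ in eigen-coordinates, semisimplicity, and a contraction in an adapted norm with $C_k$ a condition number. However, its key step, diagonalizability of $A_0^{[k]}$, really rests on the monomials in the complex eigen-coordinates of $A_0$ forming an eigenbasis of the extension. Purely imaginary eigenvalues alone would not suffice, and the paper passes over this point quickly.

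Your argument never needs diagonalizability. Uniform boundedness over $s\in\R$ comes directly from compactness, and that boundedness itself rules out any nilpotent part. So your announced ``main obstacle'' reduces to the spanning step, which you handle correctly. Your argument also extends verbatim to any $A_0$ whose flow $e^{sA_0}$ is uniformly bounded. The price is a less explicit constant, though expanding $(R\bu)^{\balpha}$ multinomially with $|R_{ij}|\le 1$ immediately gives, for instance, $|S_{\balpha\bbeta}(R)|\le 6^{|\balpha|}$.
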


\begin{proof}
The spectrum of \(A_0\) is \(\sigma(A_0)=\{0,0,0,0,i\omega,-i\omega\}\). Since \(A_0\) is normal, it is unitarily diagonalizable. The induced action on
the monomial vector \(\bu^{[k]}\) is block diagonal with respect to total
degree. For each \(\balpha\in\mathcal I_6^k\), the corresponding eigenvalue of
\(A_0^{[k]}\) has the form
\[
\lambda_{\balpha}
=
\sum_{j=1}^6 \alpha_j\lambda_j,
\qquad
\lambda_j\in\sigma(A_0).
\]
Hence \(\lambda_{\balpha}\in i\R\). Therefore \(A_0^{[k]}\) is diagonalizable
over \(\mathbb C\) with purely imaginary eigenvalues, and in a suitable
diagonalizing norm,
\[
\norm{\exp(\tau A_0^{[k]}/\eps)}\le1,
\qquad
\tau\in\R,\quad 0<\eps\le1 .
\]
Since the extended space is finite dimensional, this norm is equivalent to the
fixed Euclidean norm. Thus there exists \(C_k>0\), independent of \(\eps\) and
\(\tau\), such that
\[
\norm{\exp(\tau A_0^{[k]}/\eps)}\le C_k .
\]
\end{proof}

\section{The Small-Step Case}\label{sec:small}

The condition below means that the phase increment over one step,
\(h/\eps\), is bounded. The homogeneous rotation can therefore be separated
from the local change before the Taylor expansion is formed. Instead of
measuring the solution from the fixed point \(\bu(t_n)\), we use the reference
position \(\mE(t-t_n)\bu(t_n)\). The resulting deviation is \(O(h)\), so the
degree-\((k-1)\) Taylor polynomial has an \(O(h^k)\) remainder.

Assume
\begin{equation}\label{eq:small-step}
h\le c_0\eps.
\end{equation}
Recall \(\dot{\bu}=\eps^{-1}A_0\bu+\bN(\bu)\) and
\(\mE(\tau)=e^{\tau A_0/\eps}\). Fix a step \(t_n\to t_{n+1}\).
For the exact solution, define
\[
\bxi_n(t)=\bu(t)-\mE(t-t_n)\bu(t_n).
\]
The small-step CREI\((k)\) extended system is obtained by Taylor expanding
\(\bN(\mE(t-t_n)\bu(t_n)+\bxi_n)\) in the variable \(\bxi_n\) up to degree
\(k-1\).

The expansion variable in case \eqref{eq:small-step} is
\(\bxi_n(t)=\bu(t)-\mE(t-t_n)\bu(t_n)\). Before estimating the Taylor
remainder, we need a bound on \(\bxi_n\).
\begin{lemma}[Small-step deviation]\label{lem:small-dev}
Under assumption \eqref{eq:small-step}, we have
\[
\sup_{t_n\le t\le t_{n+1}}\norm{\bxi_n(t)}\le Ch.
\]
\end{lemma}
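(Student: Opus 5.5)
The plan is to derive an exact integral equation for \(\bxi_n\) from the variation-of-constants formula and then bound its right-hand side using the uniform bounds already established. Since \(\bu\) solves \(\dot{\bu}=\eps^{-1}A_0\bu+\bN(\bu)\) and \(\mE(t-t_n)\bu(t_n)\) solves the homogeneous equation with the same initial value, the deviation satisfies
\[
\dot{\bxi}_n=\frac1\eps A_0\bxi_n+\bN\bigl(\mE(t-t_n)\bu(t_n)+\bxi_n\bigr),
\qquad \bxi_n(t_n)=\bzero .
\]
This is exactly the equation \eqref{eq:crei-delta-exact} with \(A=A_0\) and \(\bF=\bN\). By Duhamel's formula it follows that
\[
\bxi_n(t)=\int_{t_n}^{t}\mE(t-s)\,\bN\bigl(\bu(s)\bigr)\,ds .
\]
Here we use \(\mE(s-t_n)\bu(t_n)+\bxi_n(s)=\bu(s)\), which holds by definition of \(\bxi_n\).

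The next step is to bound the two factors in the integrand uniformly in \(\eps\).
\begin{itemize}
\item The propagator \(\mE(\tau)=\diag(I_4,e^{\tau\Omega/\eps})\) is orthogonal, because \(\Omega\) is skew-symmetric. Hence \(\norm{\mE(\tau)}=1\) for every \(\tau\); equivalently, this is the case \(k=1\) of Lemma~\ref{lem:rotation-extension}.
\item By the standing assumption that the exact solution stays in a compact set \(K\), the quantity \(\sup_{K}\norm{\bN}\) is bounded independently of \(\eps\). This follows from Lemma~\ref{lem:smooth} at order zero. The only \(\eps\)-dependence of \(\bF\) and \(\bG\) enters through the affine maps \(\bq\) and \(\bv\), whose coefficients are bounded for \(0<\eps\le1\), together with the harmless factor \(\eps\) in the \(\bY\)-equation.
\end{itemize}
Together these give \(\norm{\bxi_n(t)}\le (t-t_n)\sup_K\norm{\bN}\le Ch\) for \(t_n\le t\le t_{n+1}\), with \(C\) independent of \(\eps\) and \(n\).

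The main point of care is that \(\bN\) is evaluated along the exact solution and not at an arbitrary point. This is what lets us avoid a Gronwall argument and obtain a bound valid even without \eqref{eq:small-step}.

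If one instead wants the argument to rely only on boundedness of \(\bu(t_n)\), I would use a Gronwall route. Compactness and the uniform Lipschitz bound on \(\bN\) from Lemma~\ref{lem:smooth} give
\[
\norm{\bxi_n(t)}\le h\,\sup\norm{\bN(\mE(s-t_n)\bu(t_n))}+L\int_{t_n}^t\norm{\bxi_n}\,ds ,
\]
and Gronwall's inequality then gives \(\norm{\bxi_n(t)}\le Ch e^{Lh}\). Here \(\mE(s-t_n)\bu(t_n)\) remains in a bounded set because \(\mE\) is orthogonal. In either route, assumption \eqref{eq:small-step} is not needed for this lemma itself. I expect it to be used only later, when the coefficients \(\bm c_{\balpha}^{(n)}(t)\) are differentiated in time.
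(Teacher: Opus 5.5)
Your proof is correct and is the paper's argument. The paper writes the same Duhamel identity componentwise: the $\by$-part is a plain integral of $\bF$ and the $\boldeta$-part carries the orthogonal propagator $e^{(t-s)\Omega/\eps}$, and it bounds both by $Ch$ via Lemma~\ref{lem:smooth}, as your block-diagonal $\mE(t-s)$ form does. Your remark that \eqref{eq:small-step} is not actually needed also agrees with the paper, whose large-step deviation Lemma~\ref{lem:large-dev} uses the identical argument without it.
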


\begin{proof}
Put \(\tau=t-t_n\). Since
\[
\mE(\tau)\bu(t_n)
=\bigl(\by(t_n),e^{\tau\Omega/\eps}\boldeta(t_n)\bigr),
\]
the deviation is
\[
\bxi_n(t)=
\binom{\by(t)-\by(t_n)}
{\boldeta(t)-e^{\tau\Omega/\eps}\boldeta(t_n)} .
\]
The \(\by\)-equation contains no \(\eps^{-1}\) term, so
\[
\by(t)-\by(t_n)=\int_{t_n}^{t}\bF(\by(s),\boldeta(s))\,ds,
\]
and Lemma \ref{lem:smooth} gives
\[
\norm{\by(t)-\by(t_n)}
\le \int_{t_n}^{t}\norm{\bF(\by(s),\boldeta(s))}\,ds
\le C(t-t_n)\le Ch .
\]
For \(\boldeta\), the equation
\[
\dot{\boldeta}-\eps^{-1}\Omega\boldeta=\bG(\by,\boldeta)
\]
is integrated exactly by the variation-of-constants formula:
\[
\boldeta(t)-e^{(t-t_n)\Omega/\eps}\boldeta(t_n)
=\int_{t_n}^{t}e^{(t-s)\Omega/\eps}\bG(\by(s),\boldeta(s))\,ds.
\]
Because \(\Omega\) is skew-symmetric, \(e^{(t-s)\Omega/\eps}\) is orthogonal, and hence
\[
\norm{\boldeta(t)-e^{(t-t_n)\Omega/\eps}\boldeta(t_n)}
\le \int_{t_n}^{t}\norm{\bG(\by(s),\boldeta(s))}\,ds
\le Ch .
\]
Combining the two component estimates gives the assertion.
\end{proof}

The truncated extended system differs from the exact monomial system only by
the Taylor remainder of \(\bN(\mE(t-t_n)\bu(t_n)+\bxi_n)\). The next estimate
turns Lemma \ref{lem:small-dev} into the uniform error bound
\(\bR_n^{[k]}=O(h^k)\), which is the forcing term in the extended error
equation.
\begin{lemma}[Small-step Taylor remainder]\label{lem:small-rem}
The Taylor remainder in the small-step truncated extended system satisfies
\[
\sup_{t_n\le t\le t_{n+1}}\norm{\bR_n^{[k]}(t)}\le Ch^k.
\]
\end{lemma}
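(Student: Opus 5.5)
The plan is to write the remainder in integral (Lagrange) form and bound it using Lemma \ref{lem:smooth} for the derivatives and Lemma \ref{lem:small-dev} for the size of the expansion variable. Throughout, take \(t\in[t_n,t_{n+1}]\) and put \(\widehat{\bu}(t)=\mE(t-t_n)\bu(t_n)\).

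First, fix the meaning of \(\bR_n^{[k]}(t)\) for CREI\((k)\), which retains Taylor degree \(k-1\):
\[
\bR_n^{[k]}(t)=\bN\bigl(\widehat{\bu}(t)+\bxi_n(t)\bigr)-\sum_{|\balpha|\le k-1}\frac{\partial^{\balpha}\bN(\widehat{\bu}(t))}{\balpha!}\,\bxi_n(t)^{\balpha}.
\]
Its integral form is
\[
\bR_n^{[k]}(t)=\sum_{|\balpha|=k}\frac{k}{\balpha!}\,\bxi_n(t)^{\balpha}\int_0^1(1-\theta)^{k-1}\,\partial^{\balpha}\bN\bigl(\widehat{\bu}(t)+\theta\bxi_n(t)\bigr)\,d\theta .
\]
This requires \(\bN\in C^k\). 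Lemma \ref{lem:smooth} provides this (with \(k\) replaced by \(k-1\) there, so its hypothesis \(\bE,\bB_1\in C^{k}\) suffices).

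Next, check that the whole segment \(\widehat{\bu}(t)+\theta\bxi_n(t)\), \(\theta\in[0,1]\), lies in a fixed compact set. The endpoints are \(\widehat{\bu}(t)\) and \(\bu(t)\). Here \(\bu(t)\) lies in the compact set \(K\) from Lemma \ref{lem:smooth}. The point \(\widehat{\bu}(t)=(\by(t_n),e^{(t-t_n)\Omega/\eps}\boldeta(t_n))\) differs from \(\bu(t_n)\in K\) only by an orthogonal rotation of the \(\boldeta\)-block. So all these points lie in the set \(\widetilde K\) obtained from \(K\) by rotating the \(\boldeta\)-component and taking the convex hull. This set is still compact and does not depend on \(\eps\).

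Then apply Lemma \ref{lem:smooth} on \(\widetilde K\) instead of \(K\); its proof goes through unchanged, since only compactness of the set is used. This gives \(\sup_{\widetilde K}\norm{\partial^{\balpha}\bN}\le C\) for \(|\balpha|=k\), uniformly in \(\eps\). Combining with \(|\bxi_n^{\balpha}|\le\norm{\bxi_n}^{k}\) and Lemma \ref{lem:small-dev}, which gives \(\norm{\bxi_n(t)}\le Ch\), yields \(\norm{\bR_n^{[k]}(t)}\le C\norm{\bxi_n(t)}^k\le Ch^k\) with \(C\) independent of \(\eps\), \(n\) and \(t\).

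I expect the main obstacle to be the compactness bookkeeping: making sure the moving reference curve and the segments to the exact solution stay in an \(\eps\)-independent compact set on which Lemma \ref{lem:smooth} applies. The orthogonality of \(e^{\tau\Omega/\eps}\) handles this. The assumption \eqref{eq:small-step} enters only through Lemma \ref{lem:small-dev}.
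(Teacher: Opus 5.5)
Your estimate of the six-component Taylor remainder of \(\bN\) is correct. Your compactness bookkeeping is also more careful than the paper, which simply invokes Lemma~\ref{lem:smooth} at these points: rotating the \(\boldeta\)-block and taking the convex hull keeps the whole segment \(\widehat{\bu}(t)+\theta\bxi_n(t)\) in an \(\eps\)-independent compact set.

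The gap is that you ``fix the meaning'' of \(\bR_n^{[k]}\) as the remainder of \(\bN\) alone, and so prove a weaker statement than the lemma. In the paper, \(\bR_n^{[k]}\) is the remainder of the truncated \emph{extended} system. It is a vector indexed by \(\cI_6^{k-1}\), one entry per retained monomial. It is precisely this vector that forces the error equation \(\tfrac{d}{dt}\boldsymbol{\mathcal D}_n^{[k-1]}=\widehat M_n^{[k-1]}\boldsymbol{\mathcal D}_n^{[k-1]}+\bR_n^{[k]}\) in Theorem~\ref{thm:small-local}. A bound on the degree-one block says nothing about the other entries, so your result cannot be used there as it stands. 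The paper's proof spends its second half on exactly this point.

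The missing step is short. For \(1\le|\bbeta|\le k-1\), differentiate the exact monomial \(\bxi_n^{\bbeta}\) using \(\dot{\bxi}_n=\eps^{-1}A_0\bxi_n+\bN(\widehat{\bu}(t)+\bxi_n)\), and subtract the corresponding row of \eqref{eq:crei-extended-components}. The \(A_0\)-terms cancel because they preserve degree, and the defect is
\[
\sum_{j=1}^6\beta_j\bxi_n^{\bbeta-\be_j}R_{j,n}^{(k)}(t)
+\sum_{j=1}^6\beta_j\sum_{\substack{|\balpha|\le k-1\\ |\bbeta|-1+|\balpha|\ge k}}
\frac{\partial^{\balpha}N_j(\widehat{\bu}(t))}{\balpha!}\,\bxi_n^{\bbeta-\be_j+\balpha}.
\]

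The first sum is \(O(h^k)\) by your bound on \(R_{j,n}^{(k)}\) and the boundedness of \(\bxi_n\). The second sum collects the products of degree at least \(k\) that the truncation discards. Its coefficients are \(\eps\)-uniformly bounded on your compact set, so it is \(O(h^k)\) by Lemma~\ref{lem:small-dev}. The paper's proof writes down only the first sum, but the second must be included as well and is handled the same way. The constant monomial has zero derivative in both systems, and \(\cI_6^{k-1}\) is finite, so the full extended remainder is \(O(h^k)\).
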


\begin{proof}
For each component \(N_\ell\) of \(\bN\), Taylor's
formula with integral remainder at \(\mE(t-t_n)\bu(t_n)\) gives
\[
N_\ell(\mE(t-t_n)\bu(t_n)+\bxi_n(t))
=\sum_{|\balpha|\le k-1}
\frac{\partial^{\balpha} N_\ell(\mE(t-t_n)\bu(t_n))}{\balpha!}
\bxi_n(t)^{\balpha}
+R_{\ell,n}^{(k)}(t),
\]
where
\[
R_{\ell,n}^{(k)}(t)
=\sum_{|\balpha|=k}\frac{k}{\balpha!}
\int_0^1(1-\theta)^{k-1}
\partial^{\balpha} N_\ell(\mE(t-t_n)\bu(t_n)+\theta\bxi_n(t))\,d\theta\,
\bxi_n(t)^{\balpha} .
\]
Set \(\bR_n^{[k]}(t)=(R_{1,n}^{(k)}(t),\ldots,R_{6,n}^{(k)}(t))^\top\) for
the degree-one remainder, and use the same notation for the finite vector of
remainders generated in the retained monomial equations.
By Lemma \ref{lem:smooth},
\[
\left|
\frac{k}{\balpha!}
\int_0^1(1-\theta)^{k-1}
\partial^{\balpha} N_\ell(\mE(t-t_n)\bu(t_n)+\theta\bxi_n(t))\,d\theta
\right|\le C_{\balpha} ,
\]
uniformly in \(n,t,h,\eps\). Lemma \ref{lem:small-dev} gives
\[
|\bxi_n(t)^{\balpha}|
\le \norm{\bxi_n(t)}^{|\balpha|}
\le Ch^k,\qquad |\balpha|=k.
\]
Thus every component \(R_{\ell,n}^{(k)}(t)\) is bounded by \(Ch^k\).

It remains to check that the same order is inherited by the extended monomial system. If \(\chi_{\bbeta}(t)=\bxi_n(t)^{\bbeta}\), \(|\bbeta|\le k-1\), then
\[
\frac{d}{dt}\chi_{\bbeta}
=\sum_{j=1}^6\beta_j\bxi_n^{\bbeta-\be_j}\dot\xi_{n,j}.
\]
Replacing \(\bN(\mE(t-t_n)\bu(t_n)+\bxi_n)\) in \(\dot\bxi_n\) by its Taylor
polynomial of degree \(k-1\) leaves the defect
\[
\sum_{j=1}^6\beta_j\bxi_n^{\bbeta-\be_j}R_{j,n}^{(k)}(t).
\]
For \(|\bbeta|\le k-1\), the factor \(\bxi_n^{\bbeta-\be_j}\) is bounded on the compact set, and \(R_{j,n}^{(k)}(t)=O(h^k)\). Since the number of monomials is finite, the full extended remainder satisfies
\[
\sup_{t_n\le t\le t_{n+1}}\norm{\bR_n^{[k]}(t)}\le Ch^k .
\]
\end{proof}

Next we prove the local error result in the small-step case.
\begin{theorem}[Small-step local error]\label{thm:small-local}
One CREI\((k)\) step starting from the exact value satisfies
\[
\norm{\widetilde{\bu}_{n+1}-\bu(t_{n+1})}\le Ch^{k+1}.
\]
Here
\[
\widetilde{\bu}_{n+1}
=\mE(h)\bu(t_n)+\Pi_2^7\widetilde{\bXi}_n^{[k-1]}(t_{n+1})
\]
is the value produced by the truncated small-step extended system initialized
at the exact value \(\bu(t_n)\).
\end{theorem}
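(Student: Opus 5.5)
I plan to compare two linear time-dependent systems in the extended monomial space on the single step $[t_n,t_{n+1}]$. Write $\bX_n(t)=(\bxi_n(t)^{\bbeta})_{\bbeta\in\cI_6^{k-1}}$ for the exact monomial vector built from $\bxi_n(t)=\bu(t)-\mE(t-t_n)\bu(t_n)$. Let $\widetilde{\bXi}_n^{[k-1]}(t)$ be the truncated vector, which solves $\dot{\widetilde{\bXi}}=\widehat M_n^{[k-1]}(t)\widetilde{\bXi}$. Both start from $(1,0,\ldots,0)^\top$ at $t_n$. By construction, $\bu(t_{n+1})=\mE(h)\bu(t_n)+\Pi_2^7\bX_n(t_{n+1})$, so
\[
\widetilde{\bu}_{n+1}-\bu(t_{n+1})=\Pi_2^7\bigl(\widetilde{\bXi}_n^{[k-1]}(t_{n+1})-\bX_n(t_{n+1})\bigr).
\]
The task therefore reduces to bounding the extended error $\bm{e}(t)=\widetilde{\bXi}_n^{[k-1]}(t)-\bX_n(t)$ by $Ch^{k+1}$.

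First, I will derive the error equation. Differentiating the monomials of the exact $\bxi_n$, which satisfies \eqref{eq:crei-delta-exact} with $A=A_0$, shows that $\bX_n$ solves
\[
\dot{\bX}_n=\widehat M_n^{[k-1]}(t)\bX_n+\bR_n^{[k]}(t).
\]
Here $\bR_n^{[k]}$ collects exactly the defects identified in the proof of Lemma~\ref{lem:small-rem}. It contains the Taylor remainders $R^{(k)}_{j,n}$ multiplied by $\beta_j\bxi_n^{\bbeta-\be_j}$. It also contains the monomials of degree $\ge k$ that are discarded when a product exceeds the retained degree, which are $O(\|\bxi_n\|^k)=O(h^k)$ by Lemma~\ref{lem:small-dev}. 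Lemma~\ref{lem:small-rem} then gives $\sup_t\|\bR_n^{[k]}(t)\|\le Ch^k$. It follows that $\dot{\bm e}=\widehat M_n^{[k-1]}(t)\bm e-\bR_n^{[k]}(t)$ with $\bm e(t_n)=0$.

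Second, I will control the propagator without losing powers of $\eps$. I split $\widehat M_n^{[k-1]}(t)=\eps^{-1}A_0^{[k-1]}+B_n(t)$. The term $A_0^{[k-1]}$ is the constant singular part from \eqref{eq:A0-extension-def}, restricted to the retained degrees. This is consistent because $A_0$ maps monomials of degree $m$ to degree $m$. The term $B_n(t)$ is assembled from the coefficients $\bm c^{(n)}_{\balpha}(t)=\partial^{\balpha}\bN(\mE(t-t_n)\bu(t_n))/\balpha!$. Since $\mE$ only rotates $\boldeta$, the reference curve stays in a compact set, so Lemma~\ref{lem:smooth} gives $\sup_t\|B_n(t)\|\le C$ uniformly in $\eps$. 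Setting $\bm w(t)=\exp(-(t-t_n)A_0^{[k-1]}/\eps)\bm e(t)$ removes the singular term. Lemma~\ref{lem:rotation-extension} bounds both $\exp(\pm\tau A_0^{[k-1]}/\eps)$ by $C_k$. A Gronwall argument on $\bm w$ then gives $\|\widehat\Phi_n^{[k-1]}(t,s)\|\le C_k^2e^{C_k^2Ch}\le C$ for $t_n\le s\le t\le t_{n+1}$.

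Finally, the variation-of-constants formula $\bm e(t_{n+1})=-\int_{t_n}^{t_{n+1}}\widehat\Phi_n^{[k-1]}(t_{n+1},s)\bR_n^{[k]}(s)\,ds$ yields $\|\bm e(t_{n+1})\|\le C\cdot h\cdot Ch^k=Ch^{k+1}$. Projecting with $\Pi_2^7$, which has norm at most one, gives the claim.

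The main obstacle is making the residual bookkeeping rigorous. There are two points to check. First, the exact monomial vector must satisfy the truncated system up to an $O(h^k)$ forcing that includes the truncated higher-degree monomials, not only the Taylor remainder. Second, the $\eps^{-1}$-part of $\widehat M_n^{[k-1]}$ must be exactly the constant extension $A_0^{[k-1]}$, so that Lemma~\ref{lem:rotation-extension} applies and no factor $h/\eps$ appears in the Gronwall constant. I would verify the second point directly from \eqref{eq:crei-extended-components}: there the $A$-term preserves degree and carries no $t$-dependence. The assumption \eqref{eq:small-step} is used only through Lemmas~\ref{lem:small-dev} and~\ref{lem:small-rem}.
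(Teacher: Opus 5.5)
Your proposal is correct and follows the paper's proof step for step: an extended error equation with $O(h^k)$ forcing, the splitting $\widehat M_n^{[k-1]}(t)=\eps^{-1}A_0^{[k-1]}+K_n(t)$, factoring out $e^{(t-s)A_0^{[k-1]}/\eps}$ and combining Lemma~\ref{lem:rotation-extension} with Gronwall to bound the propagator uniformly in $\eps$, and then variation of constants followed by the projection $\Pi_2^7$. You also explicitly include in the forcing the discarded monomials of degree $\ge k$, bounded via $\norm{\bxi_n}\le Ch$; the paper's Lemma~\ref{lem:small-rem} passes over this silently, and your treatment of it is correct.
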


\begin{proof}
Let
\[
\bXi_n^{[k-1]}(t)
=\bigl(\bxi_n(t)^{\bbeta}\bigr)_{\bbeta\in\cI_6^{k-1}}
\]
be the exact monomial vector and let
\(\widetilde{\bXi}_n^{[k-1]}(t)\) be the solution of the truncated extended
system with the same initial value
\((1,0,\ldots,0)^\top\). Define
\[
\boldsymbol{\mathcal D}_n^{[k-1]}(t)
=\bXi_n^{[k-1]}(t)-\widetilde{\bXi}_n^{[k-1]}(t).
\]
The symbol \(\boldsymbol{\mathcal D}_n^{[k-1]}\) denotes the difference of
the exact and truncated monomial vectors; it is distinct from the electric
field \(\bE\).
Then
\[
\frac{d}{dt}\boldsymbol{\mathcal D}_n^{[k-1]}(t)
=\widehat M_n^{[k-1]}(t)\boldsymbol{\mathcal D}_n^{[k-1]}(t)
+\bR_n^{[k]}(t),
\qquad
\boldsymbol{\mathcal D}_n^{[k-1]}(t_n)=\bzero.
\]
Here \(\widehat M_n^{[k-1]}(t)\) is precisely the CREI extension matrix from
Section \ref{sec:crei}, evaluated along \(\mE(t-t_n)\bu(t_n)\) and truncated
at degree \(k-1\). It has the decomposition
\[
\widehat M_n^{[k-1]}(t)=\eps^{-1}A_0^{[k-1]}+K_n(t),
\]
where \(K_n(t)\) contains the Taylor coefficients
\(\partial^{\balpha}\bN(\mE(t-t_n)\bu(t_n))/\balpha!\), \(|\balpha|\le k-1\),
and the couplings produced by differentiating the retained monomials.
Let \(\bPsi_n(t,s)\) be the state-transition matrix of
\(\dot{\bw}=\widehat M_n^{[k-1]}(t)\bw\), namely
\[
\partial_t\bPsi_n(t,s)=\widehat M_n^{[k-1]}(t)\bPsi_n(t,s),
\qquad \bPsi_n(s,s)=I.
\]
To bound \(\bPsi_n\), factor out the singular oscillation:
\[
\bPsi_n(t,s)=e^{(t-s)A_0^{[k-1]}/\eps}V_n(t,s).
\]
Then \(V_n(s,s)=I\) and
\[
\partial_t V_n(t,s)=
e^{-(t-s)A_0^{[k-1]}/\eps}
K_n(t)
e^{(t-s)A_0^{[k-1]}/\eps}V_n(t,s).
\]
By Lemma \ref{lem:rotation-extension} and Lemma \ref{lem:smooth}, the coefficient in this equation has norm at most \(C\). Hence
\[
\norm{V_n(t,s)}
\le 1+\int_s^t C\norm{V_n(r,s)}\,dr .
\]
Gronwall's inequality yields
\[
\norm{V_n(t,s)}\le e^{C(t-s)}\le e^{Ch}\le C
\]
on one step. Multiplying by the uniformly bounded oscillatory exponential gives
\[
\sup_{t_n\le s\le t\le t_{n+1}}\norm{\bPsi_n(t,s)}\le C.
\]
By variation of constants,
\[
\boldsymbol{\mathcal D}_n^{[k-1]}(t_{n+1})
=\int_{t_n}^{t_{n+1}}\bPsi_n(t_{n+1},s)\bR_n^{[k]}(s)\,ds.
\]
Using Lemma \ref{lem:small-rem},
\[
\norm{\boldsymbol{\mathcal D}_n^{[k-1]}(t_{n+1})}
\le \int_{t_n}^{t_{n+1}}
\norm{\bPsi_n(t_{n+1},s)}\norm{\bR_n^{[k]}(s)}\,ds
\le C\int_{t_n}^{t_{n+1}}h^k\,ds
\le Ch^{k+1}.
\]
Applying the projection \(\Pi_2^7\) gives the asserted local error.
\end{proof}

The next lemma gives the stability estimate used in the global error recursion.
\begin{lemma}\label{lem:small-stab}
Let \(\bu_{n+1}^{(j)}\) be the CREI\((k)\) value obtained from
\(\bu_n^{(j)}\), \(j=1,2\). Then
\[
\norm{\bu_{n+1}^{(1)}-\bu_{n+1}^{(2)}}
\le (1+Ch)\norm{\bu_n^{(1)}-\bu_n^{(2)}}.
\]
\end{lemma}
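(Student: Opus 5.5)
We split the CREI\((k)\) step into its exact linear part and its nonlinear correction, and estimate each separately. By \eqref{eq:crei-step},
\[
\bu_{n+1}^{(j)}=\mE(h)\bu_n^{(j)}+\Pi_2^7\bPsi_n^{(j)}(t_{n+1},t_n)(1,0,\ldots,0)^\top,\qquad j=1,2,
\]
where \(\bPsi_n^{(j)}\) is the state-transition matrix of \(\widehat M_n^{[k-1],(j)}(t)=\eps^{-1}A_0^{[k-1]}+K_n^{(j)}(t)\). The matrix \(K_n^{(j)}(t)\) is built from the Taylor coefficients \(\partial^{\balpha}\bN(\mE(t-t_n)\bu_n^{(j)})/\balpha!\). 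Since \(\mE(h)=\diag(I_4,e^{h\Omega/\eps})\) is orthogonal, the first term contributes exactly \(\norm{\bu_n^{(1)}-\bu_n^{(2)}}\). It therefore remains to show that the correction terms differ by at most \(Ch\norm{\bu_n^{(1)}-\bu_n^{(2)}}\).

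Write \(\bw^{(j)}(t)=\bPsi_n^{(j)}(t,t_n)(1,0,\ldots,0)^\top\) and \(\bd(t)=\bw^{(1)}(t)-\bw^{(2)}(t)\). Then
\[
\dot\bd=\widehat M_n^{[k-1],(1)}(t)\bd+\bigl(K_n^{(1)}(t)-K_n^{(2)}(t)\bigr)\bw^{(2)}(t),\qquad \bd(t_n)=\bzero .
\]
The singular part \(\eps^{-1}A_0^{[k-1]}\) is the same for both solutions, so it cancels in the difference. By variation of constants,
\[
\bd(t_{n+1})=\int_{t_n}^{t_{n+1}}\bPsi_n^{(1)}(t_{n+1},s)\bigl(K_n^{(1)}(s)-K_n^{(2)}(s)\bigr)\bw^{(2)}(s)\,ds .
\]
We bound the three factors in turn.

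\begin{itemize}
\item The bound \(\norm{\bPsi_n^{(j)}(t,s)}\le C\) for \(t_n\le s\le t\le t_{n+1}\) was already proved in Theorem \ref{thm:small-local}, using Lemma \ref{lem:rotation-extension}, the conjugation \(V_n\), and Gronwall. That argument did not use exactness of the initial value, only that \(\mE(t-t_n)\bu_n^{(j)}\) stays in the compact set \(K\). Hence \(\norm{\bw^{(2)}(s)}\le C\) as well.
\item The entries of \(K_n^{(j)}\) are fixed polynomial combinations of derivatives of \(\bN\) up to order \(k-1\), evaluated at \(\mE(s-t_n)\bu_n^{(j)}\). Lemma \ref{lem:smooth} gives bounded derivatives up to order \(k\), so the mean value theorem yields
\[
\norm{K_n^{(1)}(s)-K_n^{(2)}(s)}\le C\norm{\mE(s-t_n)(\bu_n^{(1)}-\bu_n^{(2)})}=C\norm{\bu_n^{(1)}-\bu_n^{(2)}},
\]
again by orthogonality of \(\mE\).
\end{itemize}

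Integrating over an interval of length \(h\) gives \(\norm{\bd(t_{n+1})}\le Ch\norm{\bu_n^{(1)}-\bu_n^{(2)}}\). The projection \(\Pi_2^7\) has norm one, and combining with the linear part yields the claim.

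The main obstacle is making sure the constants are uniform in \(\eps\) and independent of which initial data are used. Two points need care.
\begin{enumerate}
\item The Lipschitz estimate on \(K_n\) requires \(\bN\in C^{k}\), which follows from \(\bE,\bB_1\in C^{k+1}\) via Lemma \ref{lem:smooth}. It also requires a convex neighbourhood of \(K\) (or the segment between the two reference curves) to lie in the region where those bounds hold. I would enlarge \(K\) slightly to guarantee this, as is standard.
\item The singular factor must never be estimated by \(\eps^{-1}\). This is ensured because \(\eps^{-1}A_0^{[k-1]}\) cancels in the difference equation and is otherwise absorbed into \(\bPsi_n^{(1)}\), whose bound rests on Lemma \ref{lem:rotation-extension}.
\end{enumerate}

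With these in place, the assumption \(h\le c_0\eps\) is not actually needed for this stability bound. It matters only for the local error.
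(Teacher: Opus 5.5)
Your proof is correct, but it is organized differently from the paper's. The paper applies variation of constants only to the projected degree-one equation $\frac{d}{dt}\widetilde{\bu}^{(j)}=\eps^{-1}A_0\widetilde{\bu}^{(j)}+\mathcal P^{(j)}(t,\widetilde{\bXi}^{(j)}(t))$, so the difference is propagated by the isometry $e^{(t_{n+1}-s)A_0/\eps}$. It bounds the integrand by combining two facts: the Taylor coefficients depend Lipschitz-continuously on the reference curve, and the full monomial vectors satisfy $\norm{\widetilde{\bXi}^{(1)}(s)-\widetilde{\bXi}^{(2)}(s)}\le C\norm{\bu_n^{(1)}-\bu_n^{(2)}}$. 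The paper does not prove the second fact; it merely attributes it to ``the mean-value theorem and the uniform boundedness of the extended propagators''.

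You instead keep the whole extended system. The difference $\bm{d}$ solves $\dot{\bm{d}}=\widehat M_n^{[k-1],(1)}(t)\bm{d}+\bigl(K_n^{(1)}(t)-K_n^{(2)}(t)\bigr)\bw^{(2)}(t)$ with zero initial value. The propagator bound from Theorem~\ref{thm:small-local} and the Lipschitz bound on $K_n$ then give $\norm{\bm{d}(s)}\le C(s-t_n)\norm{\bu_n^{(1)}-\bu_n^{(2)}}$ on the whole step. This is exactly the argument that substantiates the paper's asserted monomial-vector bound, even with an extra factor $s-t_n$. Your route is therefore more self-contained and produces the factor $h$ in one step. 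The paper's route displays the six-dimensional isometry more visibly, but it relies on the same extended-propagator bound without saying so.

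Your caveats are also well placed. The mean-value step needs the segment between the two reference points to stay where the derivative bounds hold, which the paper passes over. The assumption $h\le c_0\eps$ is indeed not used, beyond keeping $h$ bounded so that $e^{Ch}\le C$. That is why the paper's large-step stability lemma repeats the identical argument.
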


\begin{proof}
For \(j=1,2\), set
\[
\widehat{\bu}^{(j)}(t)=\mE(t-t_n)\bu_n^{(j)}
\]
and let \(\widetilde{\bXi}^{(j)}(t)\) be the monomial vector produced by the
small-step truncated extended system built at \(\widehat{\bu}^{(j)}(t)\).
The projected numerical state is
\[
\widetilde{\bu}^{(j)}(t)
=\widehat{\bu}^{(j)}(t)+\Pi_2^7\widetilde{\bXi}^{(j)}(t).
\]
In particular, \(\bu_{n+1}^{(j)}=\widetilde{\bu}^{(j)}(t_{n+1})\).
For a retained monomial vector \(\bchi=(\chi_{\balpha})_{|\balpha|\le k-1}\),
define
\[
\mathcal P^{(j)}(t,\bchi)
=
\sum_{|\balpha|\le k-1}
\frac{\partial^{\balpha}\bN(\widehat{\bu}^{(j)}(t))}{\balpha!}
\chi_{\balpha}.
\]
Then the projected equations can be written as
\[
\frac{d}{dt}\widetilde{\bu}^{(j)}(t)
=\eps^{-1}A_0\widetilde{\bu}^{(j)}(t)
+\mathcal P^{(j)}\bigl(t,\widetilde{\bXi}^{(j)}(t)\bigr),
\qquad j=1,2,
\]
On the compact set, the coefficients
\(\partial^{\balpha}\bN(\widehat{\bu}^{(j)}(t))/\balpha!\) are bounded and
Lipschitz with respect to the initial state. Moreover, the monomial vector
generated by two initial states satisfies the finite-dimensional bound
\[
\norm{\widetilde{\bXi}^{(1)}(s)-\widetilde{\bXi}^{(2)}(s)}
\le C\norm{\bu_n^{(1)}-\bu_n^{(2)}},\qquad t_n\le s\le t_{n+1},
\]
which follows from the mean-value theorem and the uniform boundedness of the extended propagators on one step.

Variation of constants for the projected degree-one block gives
\[
\begin{aligned}
\widetilde{\bu}^{(1)}(t_{n+1})-\widetilde{\bu}^{(2)}(t_{n+1})
&=e^{hA_0/\eps}\bigl(\bu_n^{(1)}-\bu_n^{(2)}\bigr)\\
&\quad+\int_{t_n}^{t_{n+1}}e^{(t_{n+1}-s)A_0/\eps}
\left[
\mathcal P^{(1)}\bigl(s,\widetilde{\bXi}^{(1)}(s)\bigr)
-\mathcal P^{(2)}\bigl(s,\widetilde{\bXi}^{(2)}(s)\bigr)
\right]\,ds .
\end{aligned}
\]
Since \(A_0=\diag(0,\Omega)\) is skew-symmetric on the oscillatory block, \(e^{hA_0/\eps}\) has norm one on the projected block. The integrand is bounded by \(C\norm{\bu_n^{(1)}-\bu_n^{(2)}}\). Consequently
\[
\norm{\bu_{n+1}^{(1)}-\bu_{n+1}^{(2)}}
\le \norm{\bu_n^{(1)}-\bu_n^{(2)}}+
Ch\norm{\bu_n^{(1)}-\bu_n^{(2)}},
\]
which is the asserted estimate.
\end{proof}

By combining Lemmas \ref{lem:small-dev}, \ref{lem:small-rem}, and
\ref{lem:small-stab} with Theorem \ref{thm:small-local}, we obtain the uniform
convergence result for the small-step case.
\begin{theorem}[Small-step global error]\label{thm:small-global}
If \(h\le c_0\eps\), then
\[
\max_{0\le n\le T/h}\norm{\bu_n-\bu(t_n)}\le Ch^k.
\]
\end{theorem}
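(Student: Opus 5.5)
We follow the standard Lady Windermere's fan argument, combining the local error bound of Theorem \ref{thm:small-local} with the one-step stability estimate of Lemma \ref{lem:small-stab}. Write \(\Phi_h\) for one CREI\((k)\) step, so that \(\bu_{n+1}=\Phi_h(\bu_n)\), and set \(\be_n=\bu_n-\bu(t_n)\). We split the error as
\[
\be_{n+1}
=\bigl(\Phi_h(\bu_n)-\Phi_h(\bu(t_n))\bigr)
+\bigl(\Phi_h(\bu(t_n))-\bu(t_n+h)\bigr).
\]
The first bracket is bounded by \((1+Ch)\norm{\be_n}\) by Lemma \ref{lem:small-stab}. The second is the local error \(\widetilde{\bu}_{n+1}-\bu(t_{n+1})\), which is bounded by \(Ch^{k+1}\) by Theorem \ref{thm:small-local}. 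Both constants come from Lemmas \ref{lem:smooth}, \ref{lem:rotation-extension}, \ref{lem:small-dev} and \ref{lem:small-rem}, so they are independent of \(\eps\) and \(n\) (given \(h\le c_0\eps\)). This gives the recursion
\[
\norm{\be_{n+1}}\le(1+Ch)\norm{\be_n}+Ch^{k+1},\qquad \be_0=\bzero.
\]

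Next we unroll the recursion with the discrete Gronwall lemma:
\[
\norm{\be_n}\le Ch^{k+1}\sum_{j=0}^{n-1}(1+Ch)^{j}\le Ch^{k+1}\,\frac{e^{CT}-1}{Ch}\le C_T h^k
\]
for \(nh\le T\). This is the claimed bound, with a constant depending on \(T\) but not on \(\eps\).

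The main obstacle is that Lemma \ref{lem:smooth} and Lemma \ref{lem:small-stab} are only valid while exact and numerical values stay in a compact set \(K\), so that set must not be assumed in advance. We will fix \(K_0\), a compact set containing the exact trajectory on \([0,T]\). We then take \(K\) to be its closed \(1\)-neighbourhood, together with the images of \(K\) under \(\mE(\tau)\); these images remain bounded because \(\mE(\tau)\) is orthogonal. All constants above are taken relative to \(K\). We then argue by induction on \(n\). If \(\bu_0,\ldots,\bu_n\in K\), the recursion holds up to step \(n\) and gives \(\norm{\be_{n+1}}\le C_Th^k\). For \(h\le h_0\), with \(h_0\) independent of \(\eps\) and chosen so that \(C_Th_0^k\le1\), this forces \(\bu_{n+1}\in K\).

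One further point needs checking. The intermediate curves \(\mE(t-t_n)\bu_n+\Pi_2^7\widetilde{\bXi}(t)\) used inside the stability lemma must also stay in \(K\). This follows from the one-step boundedness of the extended propagator, established in the proof of Theorem \ref{thm:small-local}, together with the fact that the forcing over one step is \(O(h)\). With this in place the induction closes and the theorem follows.
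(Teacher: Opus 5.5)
Your proof is correct and is essentially the paper's: the same splitting of \(\be_{n+1}\) into a stability term (Lemma \ref{lem:small-stab}) and a local error term (Theorem \ref{thm:small-local}), the same recursion \(\norm{\be_{n+1}}\le(1+Ch)\norm{\be_n}+Ch^{k+1}\) with \(\be_0=\bzero\), and the same discrete Gronwall summation (your bound \((e^{CT}-1)/(Ch)\) is a slightly sharper form of the paper's \(n(1+Ch)^n\le (T/h)e^{CT}\)). The one addition is your induction keeping \(\bu_n\) within distance \(1\) of \(\bu(t_n)\) for \(h\le h_0\); the paper does not do this, because it assumes as a standing hypothesis (Lemma \ref{lem:smooth}, Theorem \ref{thm:main}) that the numerical solution stays in an \(\eps\)-independent compact set, so your bootstrap removes that assumption at the cost of the restriction \(h\le h_0\).
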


\begin{proof}
Let \(\be_n=\bu_n-\bu(t_n)\). By the stability estimate and the local error bound,
\[
\norm{\be_{n+1}}\le (1+Ch)\norm{\be_n}+Ch^{k+1}.
\]
Set \(a_n=\norm{\be_n}\), \(\alpha=Ch\), and \(\beta=Ch^{k+1}\). Then
\[
a_{n+1}\le (1+\alpha)a_n+\beta,\qquad a_0=0.
\]
We first record the elementary discrete Gronwall calculation. By induction,
\[
a_n\le (1+\alpha)^n a_0+\beta\sum_{j=0}^{n-1}(1+\alpha)^j .
\]
Indeed, multiplying the bound for \(a_n\) by \(1+\alpha\) and adding \(\beta\) gives the bound for \(a_{n+1}\). Since \(a_0=0\), this yields
\[
\norm{\be_n}
\le Ch^{k+1}\sum_{j=0}^{n-1}(1+Ch)^j .
\]
For \(nh\le T\),
\[
\sum_{j=0}^{n-1}(1+Ch)^j
\le n(1+Ch)^n
\le \frac{T}{h}e^{CT}.
\]
Consequently,
\[
\norm{\be_n}\le Ch^k,\qquad nh\le T.
\]
\end{proof}

\section{The Large-Step Case}\label{sec:large}

Here \(h/\eps\) is bounded from below and may be large. The homogeneous
solution can travel an \(O(1)\) distance from the fixed point \(\bu_n\) during
one step, since
\((e^{(t-t_n)A_0/\eps}-I)\bu_n\) is not controlled by \(h\) uniformly in
\(\eps\). A Taylor expansion at \(\bu_n\) would therefore mix the known rapid
rotation with the genuinely local correction. CREI instead follows the
reference position \(\widehat{\bu}_n(t)=\mE(t-t_n)\bu_n\) and expands only in
\(\bdelta_n=\bu-\widehat{\bu}_n\). This keeps \(\bdelta_n=O(h)\) and yields the
same power of \(h\) in the Taylor remainder independently of \(h/\eps\).

Assume
\begin{equation}\label{eq:large-step}
h\ge c_1\eps.
\end{equation}
For a step starting from \(\bu_n=(\by_n,\boldeta_n)\), use the homogeneous reference curve
\[
\widehat{\by}_n(t)=\by_n,\qquad
\widehat{\boldeta}_n(t)=e^{(t-t_n)\Omega/\eps}\boldeta_n.
\]
Define \(\delta\by(t)=\by(t)-\widehat{\by}_n(t)\),
\(\delta\boldeta(t)=\boldeta(t)-\widehat{\boldeta}_n(t)\), and
\(\bdelta_n(t)=(\delta\by(t)^\top,\delta\boldeta(t)^\top)^\top\). CREI\((k)\)
is obtained by Taylor expanding
\(\bN(\widehat{\bu}_n(t)+\bdelta_n)\) in \(\bdelta_n\) up to degree \(k-1\).

The expansion variable in this case is
\(\bdelta_n=(\delta\by^\top,\delta\boldeta^\top)^\top\). The estimate below
shows that the degree-\(k\) terms in the Taylor expansion of
\(\bN(\widehat{\bu}_n(t)+\bdelta_n)\) are \(O(h^k)\), even when the reference
curve contains \(e^{(t-t_n)\Omega/\eps}\boldeta_n\).
\begin{lemma}[Large-step deviation]\label{lem:large-dev}
Along the exact solution,
\[
\sup_{t_n\le t\le t_{n+1}}\bigl(\norm{\delta\by(t)}+\norm{\delta\boldeta(t)}\bigr)\le Ch.
\]
\end{lemma}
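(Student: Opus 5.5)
The argument is the same as for Lemma \ref{lem:small-dev}. The ratio \(h/\eps\) never enters, because the reference curve \(\widehat{\bu}_n(t)=\mE(t-t_n)\bu_n\) is exactly the homogeneous flow of the singular part. I take \(\bu_n=\bu(t_n)\), the exact value at \(t_n\), as in the local error analysis, so that \(\bdelta_n\) is the deviation of the exact solution from the curve started at \(\bu(t_n)\). I split \(\bdelta_n\) into its slow and fast components and bound each separately.

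\textbf{Slow component.} The \(\by\)-equation in \eqref{eq:y-z} has no \(\eps^{-1}\) term and \(\widehat{\by}_n(t)=\by(t_n)\) is constant, so
\[
\delta\by(t)=\int_{t_n}^{t}\bF(\by(s),\boldeta(s))\,ds .
\]
Since the exact solution stays in the compact set \(K\), Lemma \ref{lem:smooth} gives \(\sup_K\norm{\bF}\le C\) uniformly in \(\eps\). Hence \(\norm{\delta\by(t)}\le C(t-t_n)\le Ch\).

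\textbf{Fast component.} The variation-of-constants formula for \(\dot{\boldeta}=\eps^{-1}\Omega\boldeta+\bG\) gives
\[
\delta\boldeta(t)=\boldeta(t)-e^{(t-t_n)\Omega/\eps}\boldeta(t_n)
=\int_{t_n}^{t}e^{(t-s)\Omega/\eps}\bG(\by(s),\boldeta(s))\,ds .
\]
Because \(\Omega\) is skew-symmetric, every \(e^{(t-s)\Omega/\eps}\) is orthogonal, and this holds for any size of the phase \((t-s)/\eps\). With \(\sup_K\norm{\bG}\le C\) from Lemma \ref{lem:smooth}, we get \(\norm{\delta\boldeta(t)}\le Ch\). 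Adding the two estimates proves the claim. The constant depends only on \(K\) and the field bounds, not on \(c_1\) or \(\eps\).

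\textbf{Main obstacle.} The point to stress is that assumption \eqref{eq:large-step} is never used. The only place where \(h/\eps\) could enter is the propagator, and its norm is exactly one because \(\Omega\) is skew-symmetric. The one step that needs care is the bound on \(\bG\). Here \(\widehat{\bH}\) contains \(\bv=\nu\be_0+Q_\perp\Omega\boldeta\) with no \(\eps^{-1}\) factor, so \(\bG\) is \(\eps\)-uniformly bounded on \(K\). This follows from Lemma \ref{lem:smooth} together with the assumption that the exact solution remains in the compact set \(K\).
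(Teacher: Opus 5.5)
Your proof is correct and follows the paper's argument essentially verbatim. You bound the slow part by integrating the $\eps$-uniformly bounded $\bF$, and the fast part by variation of constants, using the orthogonal propagator $e^{(t-s)\Omega/\eps}$ and the bounded $\bG$; your remarks that $\bu_n=\bu(t_n)$ is implicitly required (so that $\delta\boldeta(t_n)=\bzero$) and that the assumption $h\ge c_1\eps$ is never used are both accurate.
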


\begin{proof}
The \(\by\)-part satisfies, directly from \(\dot{\by}=\bF(\by,\boldeta)\),
\[
\delta\by(t)=\int_{t_n}^{t}\bF(\by(s),\boldeta(s))\,ds,
\]
and therefore
\[
\norm{\delta\by(t)}
\le \int_{t_n}^{t}\norm{\bF(\by(s),\boldeta(s))}\,ds
\le Ch .
\]
For the \(\boldeta\)-part, subtract the homogeneous equation
\[
\frac{d}{dt}\widehat{\boldeta}_n(t)=\eps^{-1}\Omega\widehat{\boldeta}_n(t)
\]
from \(\dot{\boldeta}=\eps^{-1}\Omega\boldeta+\bG(\by,\boldeta)\). Since
\(\delta\boldeta(t_n)=\bzero\), variation of constants gives
\[
\delta\boldeta(t)=\int_{t_n}^{t}e^{(t-s)\Omega/\eps}\bG(\by(s),\boldeta(s))\,ds.
\]
The propagator is orthogonal and \(\bG\) is bounded on the compact set. Hence
\[
\norm{\delta\boldeta(t)}
\le \int_{t_n}^{t}\norm{\bG(\by(s),\boldeta(s))}\,ds
\le Ch .
\]
\end{proof}

The next estimate identifies the defect caused by replacing
\(\bN(\widehat{\bu}_n(t)+\bdelta_n)\) by its Taylor polynomial of degree
\(k-1\). It provides the bound \(\bR_n^{[k]}=O(h^k)\) for the large-step
extended error equation.
\begin{theorem}[Large-step Taylor remainder]\label{thm:large-rem}
The Taylor remainder in the large-step truncated extended system satisfies
\[
\sup_{t_n\le t\le t_{n+1}}\norm{\bR_n^{[k]}(t)}\le Ch^k.
\]
\end{theorem}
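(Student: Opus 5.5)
The plan is to mirror the proof of Lemma \ref{lem:small-rem}, with Lemma \ref{lem:large-dev} taking the place of Lemma \ref{lem:small-dev}. The only place the step-size regime entered the small-step argument was the deviation bound. Lemma \ref{lem:large-dev} gives \(\norm{\bdelta_n(t)}\le Ch\) on \([t_n,t_{n+1}]\) with no restriction on \(h/\eps\), so the remainder estimate should go through verbatim.

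\emph{Step 1: the degree-one remainder.} For each component \(N_\ell\) I will write Taylor's formula with integral remainder at the moving point \(\widehat{\bu}_n(t)=\mE(t-t_n)\bu_n\):
\[
R_{\ell,n}^{(k)}(t)=\sum_{|\balpha|=k}\frac{k}{\balpha!}\int_0^1(1-\theta)^{k-1}\partial^{\balpha}N_\ell\bigl(\widehat{\bu}_n(t)+\theta\bdelta_n(t)\bigr)\,d\theta\;\bdelta_n(t)^{\balpha}.
\]
The points \(\widehat{\bu}_n(t)+\theta\bdelta_n(t)\) are convex combinations of \(\widehat{\bu}_n(t)\) and \(\bu(t)\). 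Moreover, \(\widehat{\bu}_n(t)\) has the same \(\by\)-component and the same \(\norm{\boldeta}\) as \(\bu_n\), because \(e^{\tau\Omega/\eps}\) is orthogonal. Hence all these points lie in a fixed compact neighbourhood \(K'\) of the compact set assumed in Lemma \ref{lem:smooth}, namely its closed convex hull together with its rotations in the \(\boldeta\)-plane. On \(K'\), Lemma \ref{lem:smooth} bounds \(\partial^{\balpha}N_\ell\), \(|\balpha|=k\), uniformly in \(\eps\), \(n\), and \(t\). Combined with \(|\bdelta_n^{\balpha}|\le\norm{\bdelta_n}^k\le Ch^k\) from Lemma \ref{lem:large-dev}, this gives \(|R_{\ell,n}^{(k)}(t)|\le Ch^k\).

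\emph{Step 2: propagation to the extended system.} I will differentiate \(\chi_{\bbeta}=\bdelta_n^{\bbeta}\) for \(|\bbeta|\le k-1\). The singular part \(\eps^{-1}A_0\bdelta_n\) of \(\dot{\bdelta}_n\) is reproduced exactly by \(\eps^{-1}A_0^{[k-1]}\), since \(A_0\bdelta\) is linear and preserves degree. Therefore no remainder is generated by the stiff term, and this is where the curve reference matters. The only defect is \(\sum_j\beta_j\bdelta_n^{\bbeta-\be_j}R_{j,n}^{(k)}(t)\). I need to make sure this statement matches how the truncation is defined: products of retained coefficients with monomials of total degree above \(k-1\) are also discarded. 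Those discarded terms are bounded by \(C\norm{\bdelta_n}^{k}\le Ch^k\), again by Lemma \ref{lem:large-dev}, so they are absorbed into \(\bR_n^{[k]}\). The factors \(\bdelta_n^{\bbeta-\be_j}\) are bounded, since \(\norm{\bdelta_n}\le Ch\le CT\). Taking the maximum over the finitely many multi-indices gives \(\sup_t\norm{\bR_n^{[k]}(t)}\le Ch^k\).

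\emph{Main obstacle.} The delicate point is the uniformity of the compact set. Unlike the small-step case, \(\widehat{\bu}_n(t)\) may be \(O(1)\) away from \(\bu_n\), so I must argue explicitly that the rotated reference curve stays in the region where Lemma \ref{lem:smooth} applies. I will use invariance of \(\norm{\boldeta}\) under \(e^{\tau\Omega/\eps}\), and note that \(\bq\) depends on \(\boldeta\) only through \(\eps Q_\perp\boldeta\). The second point to check is that no factor \(\eps^{-1}\) enters the remainder through the extension of \(A_0\). This holds because \(A_0\) acts linearly and its monomial extension \(A_0^{[k-1]}\) is exact on degree-preserving terms.
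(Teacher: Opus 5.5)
Your proposal is correct and follows essentially the same route as the paper. Both use Taylor's formula with integral remainder at the rotated reference point $\widehat{\bu}_n(t)$, uniform derivative bounds from Lemma~\ref{lem:smooth} together with $\norm{\bdelta_n}\le Ch$ from Lemma~\ref{lem:large-dev}, and then propagate the $O(h^k)$ defect through the differentiated monomials $\bdelta_n^{\bbeta}$, $|\bbeta|\le k-1$. You add two checks the paper's proof passes over: that the points $\widehat{\bu}_n(t)+\theta\bdelta_n(t)$ stay in a fixed compact set despite the $O(1)$ rotation (the paper only addresses this later, in the proof of Lemma~\ref{lem:large-local}), and that the monomials of degree $\ge k$ discarded by the truncation in \eqref{eq:crei-extended-components} also contribute only $O(h^k)$.
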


\begin{proof}
Let
\[
\widehat{\bu}_n(t)=
\binom{\widehat{\by}_n(t)}{\widehat{\boldeta}_n(t)}
=\binom{\by_n}{e^{(t-t_n)\Omega/\eps}\boldeta_n},
\qquad
\bdelta_n(t)=\bu(t)-\widehat{\bu}_n(t).
\]
For each component \(N_\ell\) of \(\bN=(N_1,\ldots,N_6)^\top\), Taylor's
formula at \(\widehat{\bu}_n(t)\) gives
\[
N_\ell(\widehat{\bu}_n(t)+\bdelta_n(t))
=\sum_{|\balpha|\le k-1}
\frac{\partial^{\balpha} N_\ell(\widehat{\bu}_n(t))}{\balpha!}
\bdelta_n(t)^{\balpha}
+R_{\ell,n}^{(k)}(t),
\]
where the integral remainder is
\[
R_{\ell,n}^{(k)}(t)
=\sum_{|\balpha|=k}\frac{k}{\balpha!}
\int_0^1(1-\theta)^{k-1}
\partial^{\balpha} N_\ell(\widehat{\bu}_n(t)+\theta\bdelta_n(t))\,d\theta\,
\bdelta_n(t)^{\balpha} .
\]
Set \(\bR_n^{[k]}(t)\) to be the finite vector of degree-one and retained
monomial remainders generated by \(R_{\ell,n}^{(k)}(t)\).
Lemma \ref{lem:smooth} bounds the derivatives in the integral uniformly in \(\eps\), and Lemma \ref{lem:large-dev} gives
\[
|\bdelta_n(t)^{\balpha}|\le \norm{\bdelta_n(t)}^{|\balpha|}\le Ch^k,\qquad |\balpha|=k .
\]
Therefore \(R_{\ell,n}^{(k)}(t)=O(h^k)\) uniformly on the step.

The extended system is obtained by differentiating monomials
\[
\bdelta_n^{\bbeta},\qquad |\bbeta|\le k-1.
\]
The defect in the derivative of such a monomial is
\[
\sum_{j=1}^6\beta_j\bdelta_n^{\bbeta-\be_j}R_{j,n}^{(k)}(t).
\]
Since the deviation \(\bdelta_n\) is bounded by \(Ch\) and, in particular, is bounded independently of \(h\), each of these terms is \(O(h^k)\). There are only finitely many multi-indices in \(\cI_6^{k-1}\), so the full extended remainder satisfies the asserted estimate.
\end{proof}

With the large-step remainder controlled, the exact deviation monomial vector
\(\bZ_n^{[k-1]}\) can be compared with
\(\widetilde{\bZ}_n^{[k-1]}\). The boundedness of
\(\exp((t-s)A_0^{[k-1]}/\eps)\) converts the \(O(h^k)\) forcing into an
\(O(h^{k+1})\) one-step error after projection by \(\Pi_2^7\).
\begin{lemma}[Large-step local error]\label{lem:large-local}
One CREI\((k)\) step starting from the exact value satisfies
\[
\norm{\widetilde{\bu}_{n+1}-\bu(t_{n+1})}\le Ch^{k+1}.
\]
Here
\[
\widetilde{\bu}_{n+1}
=\widehat{\bu}_n(t_{n+1})
+\Pi_2^7\widetilde{\bZ}_n^{[k-1]}(t_{n+1})
\]
is the value produced by the truncated large-step extended system initialized
at the exact value \(\bu(t_n)\).
\end{lemma}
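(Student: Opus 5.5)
The plan mirrors the proof of Theorem \ref{thm:small-local}; only the source of the remainder bound changes. Start the step at the exact value, so \(\bu_n=\bu(t_n)\) and \(\widehat{\bu}_n(t)=\mE(t-t_n)\bu(t_n)\).

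First compare the two monomial vectors. Let \(\bZ_n^{[k-1]}(t)=(\bdelta_n(t)^{\bbeta})_{\bbeta\in\cI_6^{k-1}}\) be the exact monomial vector built from \(\bdelta_n=\bu-\widehat{\bu}_n\). Let \(\widetilde{\bZ}_n^{[k-1]}\) solve the truncated system \(\dot{\bw}=\widehat M_n^{[k-1]}(t)\bw\) with initial value \((1,0,\ldots,0)^\top\). By construction, \(\widehat{\bu}_n(t_{n+1})+\Pi_2^7\bZ_n^{[k-1]}(t_{n+1})=\bu(t_{n+1})\). Hence the local error equals \(\Pi_2^7\boldsymbol{\mathcal D}_n(t_{n+1})\), where \(\boldsymbol{\mathcal D}_n=\bZ_n^{[k-1]}-\widetilde{\bZ}_n^{[k-1]}\). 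Differentiating the exact monomials via \eqref{eq:crei-extended-components}, with \(\bN\) replaced by its Taylor expansion plus remainder, gives
\[
\frac{d}{dt}\boldsymbol{\mathcal D}_n=\widehat M_n^{[k-1]}(t)\boldsymbol{\mathcal D}_n+\bR_n^{[k]}(t),\qquad \boldsymbol{\mathcal D}_n(t_n)=\bzero .
\]
Theorem \ref{thm:large-rem} bounds the forcing by \(\sup\norm{\bR_n^{[k]}}\le Ch^k\).

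Next bound the state-transition matrix \(\bPsi_n(t,s)\) of \(\widehat M_n^{[k-1]}(t)=\eps^{-1}A_0^{[k-1]}+K_n(t)\) uniformly in \(\eps\) and in \(h/\eps\). Here \(K_n(t)\) collects the Taylor coefficients \(\partial^{\balpha}\bN(\widehat{\bu}_n(t))/\balpha!\) together with the integer multiplicities from monomial differentiation. Since \(\widehat{\bu}_n(t)\) stays in the compact set (the rotation is orthogonal), Lemma \ref{lem:smooth} gives \(\norm{K_n(t)}\le C\) independently of \(\eps\). Write \(\bPsi_n(t,s)=e^{(t-s)A_0^{[k-1]}/\eps}V_n(t,s)\). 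Then \(V_n\) solves a linear equation whose coefficient is the conjugate \(e^{-(t-s)A_0^{[k-1]}/\eps}K_n(t)e^{(t-s)A_0^{[k-1]}/\eps}\). By Lemma \ref{lem:rotation-extension}, this coefficient is bounded by \(C_k^2C\), and the bound holds for all \(\tau\in\R\), so it does not use \(h\le c_0\eps\). Gronwall then gives \(\norm{V_n(t,s)}\le e^{Ch}\le C\) for \(h\le T\), and hence \(\norm{\bPsi_n}\le C\) on the step.

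Finally, variation of constants gives \(\boldsymbol{\mathcal D}_n(t_{n+1})=\int_{t_n}^{t_{n+1}}\bPsi_n(t_{n+1},s)\bR_n^{[k]}(s)\,ds\). Its norm is at most \(Ch\cdot Ch^k=Ch^{k+1}\), and applying \(\Pi_2^7\) (norm one) concludes.

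The main obstacle is the uniform bound on \(\bPsi_n\) when \(h/\eps\) is large. The argument must rest only on Lemma \ref{lem:rotation-extension}, which holds for all \(\tau\), and on the \(\eps\)-uniform bound of \(K_n\). In particular, we must check that the moving reference point \(\widehat{\bu}_n(t)\) never leaves the compact set on which Lemma \ref{lem:smooth} applies. This holds because \(\norm{e^{\tau\Omega/\eps}\boldeta_n}=\norm{\boldeta_n}\), so the compact set can be enlarged to be rotation invariant in the \(\boldeta\)-block.
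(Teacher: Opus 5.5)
Your proposal is correct and follows the paper's proof step for step: the same error equation for \(\boldsymbol{\mathcal D}_n^{[k-1]}\), the factorization \(\bPsi_n(t,s)=e^{(t-s)A_0^{[k-1]}/\eps}V_n(t,s)\) controlled by Lemma~\ref{lem:rotation-extension} (valid for all \(\tau\in\R\)) together with the \(\eps\)-uniform bound on \(K_n\) along the rotation-invariant reference curve, Gronwall on one step, and variation of constants with Theorem~\ref{thm:large-rem}. One bookkeeping remark applies equally to the paper: differentiating the exact monomials also produces the products \(c_{j,\balpha}^{(n)}(t)\bdelta_n^{\bbeta-\be_j+\balpha}\) with \(|\bbeta|-1+|\balpha|\ge k\) that \eqref{eq:crei-extended-components} discards, so these belong in the forcing as well, but they are \(O(h^k)\) by Lemma~\ref{lem:large-dev} and the bound is unaffected.
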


\begin{proof}
Let
\[
\bZ_n^{[k-1]}(t)
=\bigl(\bdelta_n(t)^{\bbeta}\bigr)_{\bbeta\in\cI_6^{k-1}}
\]
be the exact deviation monomial vector, and let
\(\widetilde{\bZ}_n^{[k-1]}(t)\) be the solution of the truncated large-step
extended system with the same initial value \((1,0,\ldots,0)^\top\). Define
\[
\boldsymbol{\mathcal D}_n^{[k-1]}(t)
=\bZ_n^{[k-1]}(t)-\widetilde{\bZ}_n^{[k-1]}(t).
\]
Then
\[
\frac{d}{dt}\boldsymbol{\mathcal D}_n^{[k-1]}(t)
=\widehat M_n^{[k-1]}(t)\boldsymbol{\mathcal D}_n^{[k-1]}(t)
+\bR_n^{[k]}(t),
\qquad
\boldsymbol{\mathcal D}_n^{[k-1]}(t_n)=\bzero.
\]
The matrix \(\widehat M_n^{[k-1]}(t)\) is the CREI extension matrix obtained
from the Taylor polynomial of \(\bN(\widehat{\bu}_n(t)+\bdelta)\) through
degree \(k-1\). It has the decomposition
\[
\widehat M_n^{[k-1]}(t)=\eps^{-1}A_0^{[k-1]}+K_n(t),
\]
where \(K_n(t)\) contains the Taylor coefficients
\(\partial^{\balpha}\bN(\widehat{\bu}_n(t))/\balpha!\), \(|\balpha|\le k-1\),
and the couplings produced by differentiating the retained monomials. These
coefficients are evaluated at
\[
\widehat{\bu}_n(t)=\bigl(\by_n,e^{(t-t_n)\Omega/\eps}\boldeta_n\bigr).
\]
Since the exponential is orthogonal, this curve remains in a compact set whenever \(\bu_n\) does. Lemma \ref{lem:smooth} therefore gives
\[
\sup_{t_n\le t\le t_{n+1}}\norm{K_n(t)}\le C.
\]
Let \(\bPsi_n(t,s)\) be the state-transition matrix of
\(\dot{\bw}=\widehat M_n^{[k-1]}(t)\bw\).
As in the proof of Theorem \ref{thm:small-local}, write
\[
\bPsi_n(t,s)=e^{(t-s)A_0^{[k-1]}/\eps}V_n(t,s).
\]
Then
\[
\partial_t V_n(t,s)
=e^{-(t-s)A_0^{[k-1]}/\eps}
K_n(t)
e^{(t-s)A_0^{[k-1]}/\eps}V_n(t,s),
\qquad
V_n(s,s)=I .
\]
By Lemma \ref{lem:rotation-extension},
\[
\norm{\partial_t V_n(t,s)}
\le C\norm{V_n(t,s)}.
\]
Equivalently,
\[
\norm{V_n(t,s)}
\le 1+C\int_s^t\norm{V_n(r,s)}\,dr .
\]
Gronwall's inequality implies \(\norm{V_n(t,s)}\le e^{C(t-s)}\). Since \(t-s\le h\le T\), we obtain
\[
\norm{\bPsi_n(t,s)}\le C.
\]
Variation of constants gives
\[
\boldsymbol{\mathcal D}_n^{[k-1]}(t_{n+1})
=\int_{t_n}^{t_{n+1}}
\bPsi_n(t_{n+1},s)\bR_n^{[k]}(s)\,ds .
\]
Using Theorem \ref{thm:large-rem},
\[
\norm{\boldsymbol{\mathcal D}_n^{[k-1]}(t_{n+1})}
\le \int_{t_n}^{t_{n+1}}
\norm{\bPsi_n(t_{n+1},s)}
\norm{\bR_n^{[k]}(s)}\,ds
\le C\int_{t_n}^{t_{n+1}}h^k\,ds
\le Ch^{k+1}.
\]
The projection \(\Pi_2^7\) gives the local error for \(\bu\).
\end{proof}

For the global estimate, the numerical values produced from two initial states
must satisfy the following Lipschitz bound.
\begin{lemma}[Large-step stability]\label{lem:large-stab}
Let \(\bu_{n+1}^{(j)}\) be the CREI\((k)\) value obtained from
\(\bu_n^{(j)}\), \(j=1,2\). Then
\[
\norm{\bu_{n+1}^{(1)}-\bu_{n+1}^{(2)}}
\le (1+Ch)\norm{\bu_n^{(1)}-\bu_n^{(2)}}.
\]
\end{lemma}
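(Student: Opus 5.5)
I will follow the structure of the proof of Lemma \ref{lem:small-stab}, since the large-step scheme differs only in that \(h/\eps\) is no longer bounded. The key point is that none of the earlier estimates used \(h\le c_0\eps\); they only used the orthogonality of \(e^{\tau\Omega/\eps}\) and Lemma \ref{lem:rotation-extension}. For \(j=1,2\), set \(\widehat{\bu}^{(j)}(t)=\mE(t-t_n)\bu_n^{(j)}\). Let \(\widetilde{\bZ}^{(j)}(t)\) solve \(\dot{\bw}=\widehat M^{(j)}(t)\bw\) with \(\bw(t_n)=(1,0,\ldots,0)^\top\), where \(\widehat M^{(j)}(t)=\eps^{-1}A_0^{[k-1]}+K^{(j)}(t)\). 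Finally put \(\widetilde{\bu}^{(j)}(t)=\widehat{\bu}^{(j)}(t)+\Pi_2^7\widetilde{\bZ}^{(j)}(t)\). Differentiating the degree-one block shows that
\[
\dot{\widetilde{\bu}}^{(j)}=\eps^{-1}A_0\widetilde{\bu}^{(j)}+\mathcal P^{(j)}\bigl(t,\widetilde{\bZ}^{(j)}(t)\bigr),
\]
with \(\mathcal P^{(j)}\) the Taylor polynomial with coefficients \(\partial^{\balpha}\bN(\widehat{\bu}^{(j)}(t))/\balpha!\). Variation of constants then gives the same representation as in Lemma \ref{lem:small-stab}: the term \(e^{hA_0/\eps}(\bu_n^{(1)}-\bu_n^{(2)})\), which has norm equal to \(\norm{\bu_n^{(1)}-\bu_n^{(2)}}\) because \(e^{hA_0/\eps}\) is orthogonal, plus an integral over one step of \(e^{(t_{n+1}-s)A_0/\eps}[\mathcal P^{(1)}-\mathcal P^{(2)}]\). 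So it suffices to bound the integrand by \(C\norm{\bu_n^{(1)}-\bu_n^{(2)}}\), uniformly in \(s\in[t_n,t_{n+1}]\) and in \(h/\eps\).

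Write \(d_n=\norm{\bu_n^{(1)}-\bu_n^{(2)}}\). I split \(\mathcal P^{(1)}(s,\widetilde{\bZ}^{(1)})-\mathcal P^{(2)}(s,\widetilde{\bZ}^{(2)})\) into a coefficient difference applied to \(\widetilde{\bZ}^{(1)}\), plus the coefficients of \(\mathcal P^{(2)}\) applied to \(\widetilde{\bZ}^{(1)}-\widetilde{\bZ}^{(2)}\).

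\textbf{Coefficient difference.} Since \(\mE(\tau)\) is orthogonal on \(\R^6\) (identity on \(\by\), rotation on \(\boldeta\)), we have \(\norm{\widehat{\bu}^{(1)}(s)-\widehat{\bu}^{(2)}(s)}=d_n\) for every \(s\), whatever the phase \((s-t_n)/\eps\). Both curves stay in the compact set, so Lemma \ref{lem:smooth} (derivatives through order \(k\le k+1\)) makes the coefficients Lipschitz with an \(\eps\)-independent constant, and their difference is \(\le Cd_n\). The vector \(\widetilde{\bZ}^{(1)}(s)\) is bounded by the propagator bound \(\norm{\bPsi_n}\le C\) established in Lemma \ref{lem:large-local}.

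\textbf{Difference of the extended solutions.} This is the main obstacle: the extended matrices contain \(\eps^{-1}A_0^{[k-1]}\), and a naive Gronwall argument would give \(e^{Ch/\eps}\). The point is that this singular part is identical for \(j=1,2\). So \(\bw=\widetilde{\bZ}^{(1)}-\widetilde{\bZ}^{(2)}\) satisfies
\[
\dot{\bw}=\widehat M^{(1)}\bw+(K^{(1)}-K^{(2)})\widetilde{\bZ}^{(2)},\qquad \bw(t_n)=\bzero .
\]
By variation of constants with \(\bPsi^{(1)}\), which is bounded by \(C\) as in Lemma \ref{lem:large-local} (factor out \(e^{(t-s)A_0^{[k-1]}/\eps}\), use Lemma \ref{lem:rotation-extension}, then Gronwall with a bounded coefficient), we obtain \(\norm{\bw(s)}\le Ch\,\sup\norm{K^{(1)}-K^{(2)}}\le Chd_n\le Cd_n\). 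Here \(\norm{K^{(1)}-K^{(2)}}\le Cd_n\) again follows from the Lipschitz property of the Taylor coefficients along the two curves.

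Combining the two pieces, the integrand is \(\le Cd_n\), and integrating over a step of length \(h\) yields \(\norm{\bu_{n+1}^{(1)}-\bu_{n+1}^{(2)}}\le(1+Ch)d_n\). Throughout, \(h\le T\) keeps all one-step Gronwall factors \(e^{Ch}\) bounded, and no step uses an upper bound on \(h/\eps\).
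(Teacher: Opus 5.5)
Your proposal is correct and follows the paper's proof. Orthogonality of \(\mE(\tau)\) keeps the two reference curves at distance \(\norm{\bu_n^{(1)}-\bu_n^{(2)}}\), variation of constants on the projected degree-one block isolates \(e^{hA_0/\eps}(\bu_n^{(1)}-\bu_n^{(2)})\), and the Taylor-polynomial forcing difference is bounded by \(C\norm{\bu_n^{(1)}-\bu_n^{(2)}}\). The one improvement is that you prove the bound on \(\widetilde{\bZ}^{(1)}-\widetilde{\bZ}^{(2)}\) by a Duhamel argument in which the common singular part \(\eps^{-1}A_0^{[k-1]}\) cancels, whereas the paper only asserts that this bound follows from the mean-value theorem and the propagator bound.
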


\begin{proof}
The two reference curves are
\[
\widehat{\by}^{(j)}(t)=\by_n^{(j)},\qquad
\widehat{\boldeta}^{(j)}(t)=e^{(t-t_n)\Omega/\eps}\boldeta_n^{(j)},\qquad j=1,2.
\]
Write
\[
\widehat{\bu}^{(j)}(t)=
\bigl((\widehat{\by}^{(j)}(t))^\top,
(\widehat{\boldeta}^{(j)}(t))^\top\bigr)^\top .
\]
Since \(e^{(t-t_n)\Omega/\eps}\) is orthogonal, these curves are Lipschitz in the initial state uniformly in \(\eps\):
\[
\sup_{t_n\le t\le t_{n+1}}
\norm{\widehat{\bu}^{(1)}(t)-\widehat{\bu}^{(2)}(t)}
\le \norm{\bu_n^{(1)}-\bu_n^{(2)}}.
\]
Let \(\widetilde{\bZ}^{(j)}(t)\) be the retained monomial vector produced by
the \(j\)-th truncated large-step extended system.
Set
\[
\widetilde{\bu}^{(j)}(t)=
\widehat{\bu}^{(j)}(t)+\Pi_2^7\widetilde{\bZ}^{(j)}(t),
\qquad
\bu_{n+1}^{(j)}=\widetilde{\bu}^{(j)}(t_{n+1}).
\]
For a retained monomial vector
\(\bchi=(\chi_{\balpha})_{|\balpha|\le k-1}\), define
\[
\mathcal P^{(j)}(t,\bchi)
=
\sum_{|\balpha|\le k-1}
\frac{\partial^{\balpha}\bN(\widehat{\bu}^{(j)}(t))}{\balpha!}
\chi_{\balpha},
\qquad j=1,2 .
\]
The Taylor coefficients
\(\partial^{\balpha}\bN(\widehat{\bu}^{(j)}(t))/\balpha!\) are smooth
functions of the reference curves, and all relevant states remain in the same
compact set. Hence
\[
\norm{\mathcal P^{(1)}(s,\widetilde{\bZ}^{(1)}(s))
-\mathcal P^{(2)}(s,\widetilde{\bZ}^{(2)}(s))}
\le C\norm{\bu_n^{(1)}-\bu_n^{(2)}},
\qquad t_n\le s\le t_{n+1}.
\]
Here \(\widetilde{\bZ}^{(j)}\) denotes the retained monomial vector in the
\(j\)-th truncated large-step system; the bound follows from the mean-value
theorem and the uniform boundedness of the extended oscillatory propagator.

The projected degree-one block satisfies the same variation-of-constants identity as in the proof of Lemma \ref{lem:small-stab}:
\[
\begin{aligned}
\widetilde{\bu}^{(1)}(t_{n+1})-\widetilde{\bu}^{(2)}(t_{n+1})
&=e^{hA_0/\eps}\bigl(\bu_n^{(1)}-\bu_n^{(2)}\bigr)\\
&\quad+\int_{t_n}^{t_{n+1}}e^{(t_{n+1}-s)A_0/\eps}
\left[
\mathcal P^{(1)}(s,\widetilde{\bZ}^{(1)}(s))
-\mathcal P^{(2)}(s,\widetilde{\bZ}^{(2)}(s))
\right]\,ds .
\end{aligned}
\]
The first factor is norm preserving on the projected block, and the integral has length \(h\). Therefore
\[
\norm{\bu_{n+1}^{(1)}-\bu_{n+1}^{(2)}}
\le (1+Ch)\norm{\bu_n^{(1)}-\bu_n^{(2)}}.
\]
\end{proof}

The local and stability estimates imply
\(\norm{\be_{n+1}}\le(1+Ch)\norm{\be_n}+Ch^{k+1}\). The theorem below gives
the resulting \(O(h^k)\) mesh error.
\begin{theorem}[Large-step global error]\label{thm:large-global}
If \(h\ge c_1\eps\), then
\[
\max_{0\le n\le T/h}\norm{\bu_n-\bu(t_n)}\le Ch^k.
\]
\end{theorem}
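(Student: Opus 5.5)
The plan is to repeat the discrete Gronwall argument from Theorem \ref{thm:small-global}, now using the two large-step ingredients: the local error bound of Lemma \ref{lem:large-local} and the stability estimate of Lemma \ref{lem:large-stab}. Set \(\be_n=\bu_n-\bu(t_n)\) and let \(\Phi_h\) denote one CREI\((k)\) step in the large-step regime \(h\ge c_1\eps\). We split the error as
\[
\be_{n+1}=\bigl(\Phi_h(\bu_n)-\Phi_h(\bu(t_n))\bigr)+\bigl(\Phi_h(\bu(t_n))-\bu(t_{n+1})\bigr).
\]
The first term is controlled by Lemma \ref{lem:large-stab} with \(\bu_n^{(1)}=\bu_n\) and \(\bu_n^{(2)}=\bu(t_n)\). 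It is bounded by \((1+Ch)\norm{\be_n}\). The second term is exactly \(\widetilde{\bu}_{n+1}-\bu(t_{n+1})\) in the sense of Lemma \ref{lem:large-local}, so it is bounded by \(Ch^{k+1}\). Together these give
\[
\norm{\be_{n+1}}\le(1+Ch)\norm{\be_n}+Ch^{k+1},\qquad \be_0=\bzero.
\]

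Next we unroll this recursion exactly as in the proof of Theorem \ref{thm:small-global}. This gives
\[
\norm{\be_n}\le Ch^{k+1}\sum_{j<n}(1+Ch)^j\le Ch^{k+1}\frac{T}{h}e^{CT}=Ch^k
\]
for all \(nh\le T\).

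All constants must be independent of \(\eps\) and \(h/\eps\). This holds because Lemmas \ref{lem:smooth}, \ref{lem:rotation-extension}, \ref{lem:large-dev}, \ref{lem:large-local} and \ref{lem:large-stab} give constants that depend only on \(k\), \(T\) and the compact set.

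The main obstacle is that Lemmas \ref{lem:large-local} and \ref{lem:large-stab} assume all states lie in a fixed compact set \(K\) on which Lemma \ref{lem:smooth} applies. For the numerical solution this is not known in advance. We handle this with a standard bootstrap argument:

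\begin{enumerate}
\item Let \(K\) be a closed unit neighbourhood of the exact trajectory \(\{\bu(t):0\le t\le T\}\). Because the rotation \(e^{\tau\Omega/\eps}\) is orthogonal, the reference curves \(\widehat{\bu}_n(t)\) stay at the same distance from the enlarged trajectory set. So we may replace \(K\) by its rotation-invariant hull in the \(\boldeta\)-block, which is still compact.
\item Fix the constants \(C\) from the two lemmas on this set.
\item Argue by induction on \(n\). Suppose \(\norm{\be_m}\le Ch^k\) for all \(m\le n\), and take \(h\le h_0\) small enough that \(Ch_0^k\le\frac12\). Then \(\bu_n\) lies in \(K\).
\item We also need the intermediate truncated monomial solutions on \([t_n,t_{n+1}]\) to stay in \(K\). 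This follows from the bound \(\norm{\bPsi_n}\le C\) obtained in the proof of Lemma \ref{lem:large-local}: the projected deviation is then \(O(h)\), and we reduce \(h_0\) further if needed.
\item The recursion then yields \(\norm{\be_{n+1}}\le Ch^k\), which closes the induction.
\end{enumerate}

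The restriction \(h\le h_0\) with \(h_0\) independent of \(\eps\) is the only smallness needed. For \(h>h_0\) the bound holds trivially with an enlarged constant, since both solutions are bounded on \([0,T]\), provided the numerical solution is bounded there.
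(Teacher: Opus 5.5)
Your proposal is correct and follows the paper's proof. Like the paper, you split $\be_{n+1}$ through the step started from $\bu(t_n)$, bound the two pieces by Lemma \ref{lem:large-stab} and Lemma \ref{lem:large-local}, and unroll $\norm{\be_{n+1}}\le(1+Ch)\norm{\be_n}+Ch^{k+1}$ by discrete Gronwall. Your bootstrap, which keeps the numerical states (and rotated reference curves) in a fixed compact set for $h\le h_0$, is extra rigor that the paper does not supply; the paper simply assumes this compactness in Lemma \ref{lem:smooth} and Theorem \ref{thm:main}.
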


\begin{proof}
Let
\[
\be_n=\bu_n-\bu(t_n)
\]
be the global error, and let \(\bu_{n+1}^{\star}\) be the CREI\((k)\) value
obtained by starting the same step from \(\bu(t_n)\). Then
\[
\norm{\be_{n+1}}
\le
\norm{\bu_{n+1}-\bu_{n+1}^{\star}}
+\norm{\bu_{n+1}^{\star}-\bu(t_{n+1})}.
\]
The first term is bounded by Lemma \ref{lem:large-stab}, and the second term is bounded by Lemma \ref{lem:large-local}. Therefore
\[
\norm{\be_{n+1}}
\le (1+Ch)\norm{\be_n}+Ch^{k+1}.
\]
Set \(a_n=\norm{\be_n}\). Since \(a_0=0\), repeated substitution gives
\[
a_n\le Ch^{k+1}\sum_{j=0}^{n-1}(1+Ch)^j .
\]
For \(nh\le T\),
\[
\sum_{j=0}^{n-1}(1+Ch)^j
\le n(1+Ch)^n
\le \frac{T}{h}e^{CT}.
\]
Thus \(a_n\le Ch^k\), uniformly for \(0<\eps\le1\) under
\eqref{eq:large-step}.
\end{proof}

Combining the estimates from Sections \ref{sec:small} and \ref{sec:large}
gives the following result.
\begin{theorem}\label{thm:main}
Fix \(T>0\) and an integer \(k\ge2\). Assume that \(\bE\) and \(\bB_1\) are
\(C^{k+1}\) on an open set containing all exact and numerical trajectories,
that the transformed exact and numerical solutions remain in a compact set
independent of \(\eps\), and that \(\bu_0=\bu(0)\). Let \(t_n=nh\) and
\(N=\lfloor T/h\rfloor\), and let \(\bu_n\) be produced by CREI\((k)\).
Then
\[
\max_{0\le n\le N}\norm{\bu_n-\bu(t_n)}\le Ch^k
\]
in both regimes \(h\le c_0\eps\) and \(h\ge c_1\eps\). Consequently,
\[
\max_{0\le n\le N}
\bigl(\norm{\bq_n-\bq(t_n)}+\norm{\bp_n-\bp(t_n)}\bigr)\le Ch^k.
\]
\end{theorem}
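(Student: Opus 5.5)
The plan is to assemble Theorem~\ref{thm:main} from the two regime-specific global results and then transfer the bound to the original momentum variables. First I fix the constants. Under the standing hypotheses of Theorem~\ref{thm:main}, the $C^{k+1}$ regularity of $\bE,\bB_1$ and the confinement of exact and numerical transformed trajectories to a compact set $K$ independent of $\eps$ are exactly the hypotheses of Lemma~\ref{lem:smooth}. Hence the derivative bounds of $\bN$ up to order $k+1$ on $K$, including on the reference curves $\mE(t-t_n)\bu_n$, which stay in a compact set because $e^{\tau\Omega/\eps}$ is orthogonal, hold with constants independent of $\eps$. Lemma~\ref{lem:rotation-extension} is applied with degree $k-1$ and is also uniform in $\eps$ and $\tau$. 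So every constant entering Lemmas~\ref{lem:small-dev}--\ref{lem:small-stab} and \ref{lem:large-dev}--\ref{lem:large-stab} depends only on $T$, $k$, $K$ and the field bounds.

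Second, I split on the ratio $h/\eps$. Choose $c_1\le c_0$, for instance $c_0=c_1=1$; then every pair $(h,\eps)$ falls in at least one regime. If $h\le c_0\eps$, Theorem~\ref{thm:small-global} gives $\max_n\norm{\bu_n-\bu(t_n)}\le C_{\rm s}h^k$. If $h\ge c_1\eps$, Theorem~\ref{thm:large-global} gives the bound with a constant $C_{\rm l}$. Taking $C=\max(C_{\rm s},C_{\rm l})$ proves the first assertion uniformly for $0<\eps\le1$.

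It is worth noting that both regimes actually run the same scheme: the reference curve is $\mE(t-t_n)\bu_n=(\by_n,e^{(t-t_n)\Omega/\eps}\boldeta_n)$ in both cases. So a single argument would suffice, and the split only records that none of the constants depends on $h/\eps$. I also need $N=\lfloor T/h\rfloor$ with $nh\le T$ in the discrete Gronwall step, which holds by construction. The initialization $\bu_0=\bu(0)$ gives $a_0=0$.

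Third, I transfer the bound to $(\bq,\bp)$. By Proposition~\ref{prop:inverse}, $(\bq,\bp)$ is an affine function of $\bu=(\sigma,\nu,\bY,\boldeta)$ through~\eqref{eq:inverse}, with coefficient matrices built from $\be_0$, $Q_\perp$, $\Omega$, $\eps$ and $2\eps$. Their norms are bounded by $1+2\eps+\omega(1+\eps)\le C$ for $\eps\le1$, as noted in Remark~\ref{rem:inverse-lip}. Applying this map both to the numerical values and to the exact solution, the difference is linear in $\bu_n-\bu(t_n)$, so $\norm{\bq_n-\bq(t_n)}+\norm{\bp_n-\bp(t_n)}\le C\norm{\bu_n-\bu(t_n)}\le Ch^k$.

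The main obstacle is not the assembly itself but making sure the hypotheses of the cited global theorems are genuinely met uniformly. In particular, the stability lemmas need the numerical iterates to stay in $K$ so that the Lipschitz constants of the Taylor coefficients apply. This is assumed outright in Theorem~\ref{thm:main}, so I will invoke the assumption explicitly rather than derive it by a bootstrap argument.
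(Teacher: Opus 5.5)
Your proposal is correct and follows the paper's proof: apply Theorem~\ref{thm:small-global} when $h\le c_0\eps$ and Theorem~\ref{thm:large-global} when $h\ge c_1\eps$, then transfer the bound to $(\bq,\bp)$ through the $\eps$-uniformly Lipschitz linear inverse map of Remark~\ref{rem:inverse-lip}. Your extra remarks are consistent with the paper's arguments and do not change the route: taking the maximum of the two constants, and noting that both regimes use the same reference curve $\mE(t-t_n)\bu_n$ and never actually invoke $h/\eps$.
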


\begin{proof}
Combine Theorems \ref{thm:small-global} and \ref{thm:large-global} with the \(\eps\)-uniform Lipschitz property of the inverse transformation stated in Remark \ref{rem:inverse-lip}.
\end{proof}

The implementation is summarized below.
\begin{algorithm}
\caption{CREI for three-dimensional CPD}
\label{alg:multiscale}
\begin{algorithmic}[1]
\State Transform $(\bx_n, \bv_n)$ to $(\bq_n, \bp_n)$ by \eqref{eq:qp-transform}.
\State Transform $(\bq_n, \bp_n)$ to $\mathbf{u}_n = (\by_n, \boldsymbol{\eta}_n)$ using \eqref{eq:r-def} and \eqref{eq:new-vars}.
\State Set $\widehat{\bu}_n(t)=\mE(t-t_n)\bu_n$ and form the Taylor polynomial of $\bN$ along this curve.
\State Integrate the truncated monomial-extension system and project its degree-one block.
\State Reconstruct $(\bq_{n+1}, \bp_{n+1})$ by Proposition~\ref{prop:inverse}, and then get the $(\bx_{n+1}, \bv_{n+1})$.
\end{algorithmic}
\end{algorithm}

\section{Numerical Experiments}\label{sec:numerics}

This section tests the error bounds in Sections \ref{sec:small} and \ref{sec:large}. We first fix \(\eps\) and vary \(h\), and then fix \(h\) and vary \(\eps\). The parameter ranges follow the CPD-oriented tests in \cite{wangWangZhu2026}. As defined in Section \ref{sec:crei}, retaining monomials through degree \(k\) gives CREI\((k+1)\).

Corresponding to the system \eqref{eq:cpd}, we take
\[
\bB_0=(0,0,1/2)^\top,
\]
and use the smooth fields
\[
\bE(\bx)=
\begin{pmatrix}
-x_1(x_1^2+x_2^2+0.25)^{-3/2}\\
-x_2(x_1^2+x_2^2+0.25)^{-3/2}\\
0.15\sin x_3
\end{pmatrix},
\qquad
\bB_1(\bx)=
\begin{pmatrix}
0.12\cos x_2\\
0.10\sin(x_1+x_3)\\
0.08\cos(x_1-x_2)
\end{pmatrix}.
\]
The final time is \(T=1\). We use one rounded initial datum throughout the numerical section:
\[
\bx_0=(2.2,0.6,0.5)^\top,\qquad\bv_0=(0,1,0.7)^\top.
\]
This choice keeps every component to one decimal place and ensures that \(\bv_0\) is componentwise nonnegative. A high-accuracy reference solution is computed by an adaptive ODE solver with absolute and relative tolerances \(10^{-12}\), with the maximum step proportional to \(\eps\). Errors are measured in the transformed position-momentum variables:
\[
E_{\bq}(h)=\norm{\bq_N-\bq(T)},\qquad
E_{\bp}(h)=\norm{\bp_N-\bp(T)}.
\]

We compare CREI2, CREI3, and CREI4. CREI\(m\) retains all monomials \(\bdelta^{\balpha}\) with \(|\balpha|\le m-1\), where \(\bdelta=\bu-\widehat{\bu}_n(t)\). The reference curve is
\[
\widehat{\bu}_n(t)=\bigl(\by_n,e^{(t-t_n)\Omega/\eps}\boldeta_n\bigr),
\]
and the differentiated monomials are collected into the linear system defined in Section \ref{sec:crei}. The time-dependent Taylor coefficients are evaluated along \(\widehat{\bu}_n(t)\). No Richardson extrapolation is used.

In the first experiment, we fix \(\eps=\frac14\) and \(\eps=2^{-8}\), using
\(h=2^{-r},\ r=4,5,6,7\). Figures \ref{fig:fixed-eps-14} and \ref{fig:fixed-eps-28} show the position and momentum errors side by side. In each panel CREI2, CREI3, and CREI4 are plotted together, and the dotted guide lines indicate the reference slopes \(2,3,4\). The observed slopes agree well with the theoretical \(h^k\) behavior.

\begin{figure}[htbp]
\centering
\includegraphics[width=0.95\textwidth]{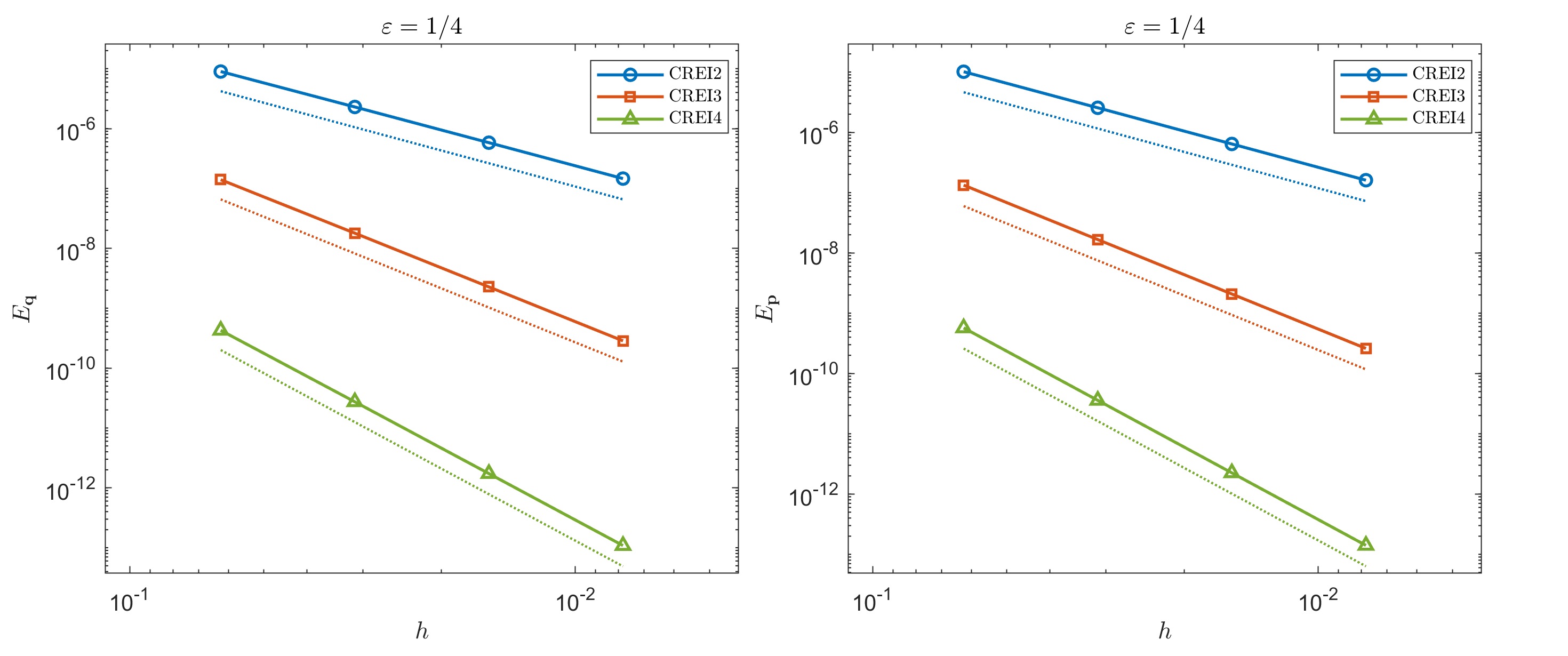}
\caption{Errors \(E_{\mathit{\bq}}\) and \(E_{\mathit{\bp}}\) versus \(h\) for fixed \(\eps=1/4\). The dotted lines are reference slopes \(O(h^2)\), \(O(h^3)\), and \(O(h^4)\).}
\label{fig:fixed-eps-14}
\end{figure}

\begin{figure}[htbp]
\centering
\includegraphics[width=0.95\textwidth]{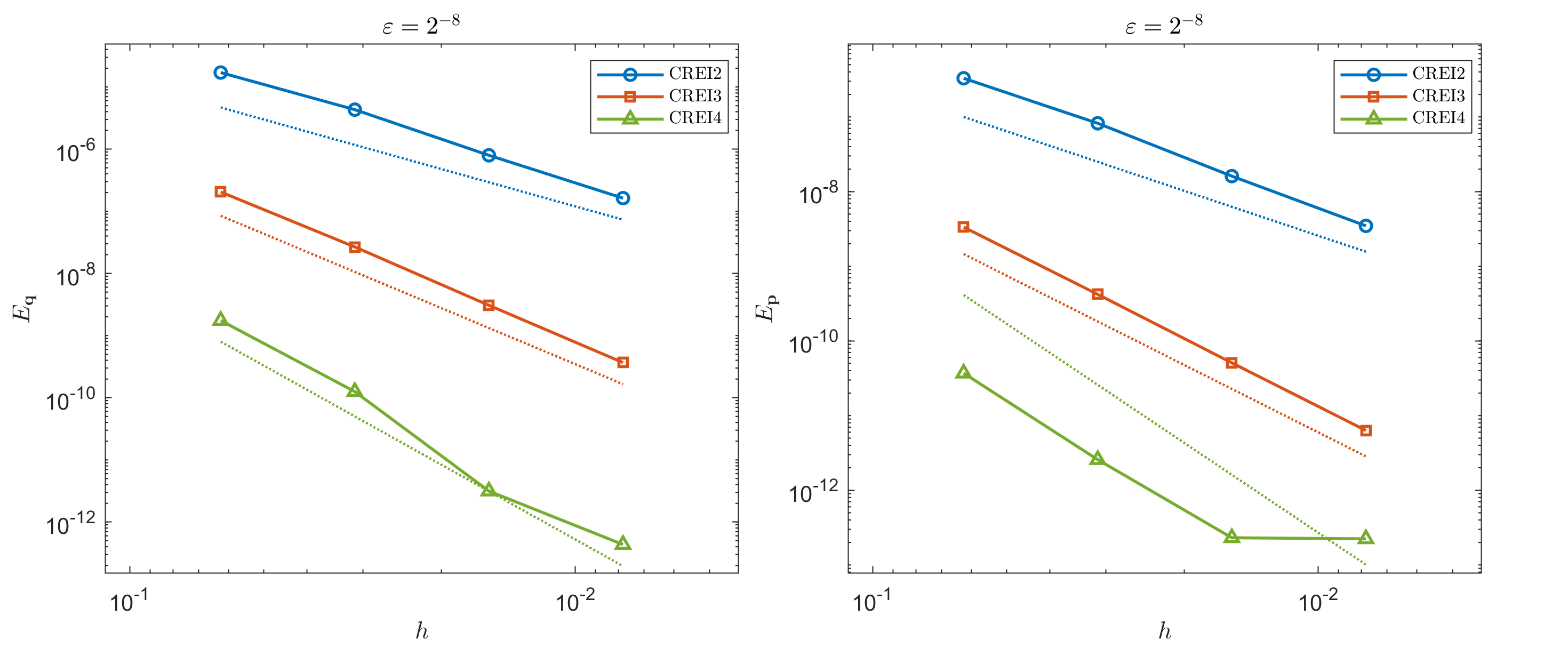}
\caption{Errors \(E_{\mathit{\bq}}\) and \(E_{\mathit{\bp}}\) versus \(h\) for fixed \(\eps=2^{-8}\). The dotted lines are reference slopes \(O(h^2)\), \(O(h^3)\), and \(O(h^4)\).}
\label{fig:fixed-eps-28}
\end{figure}

\FloatBarrier

In the second experiment, we test uniformity with respect to the oscillatory parameter. We take \(\eps=2^{-\ell},\  \ell=2,3,\ldots,8,\)
and use \(h=2^{-6}\) and \(h=1/2\). Figures \ref{fig:fixed-h-eq} and \ref{fig:fixed-h-ep} display \(E_{\mathit{\bq}}\) and \(E_{\mathit{\bp}}\) as functions of \(\eps\). The dotted horizontal guide lines mark the small-\(\eps\) error levels of the corresponding CREI methods. The errors remain bounded as \(\eps\) decreases, and increasing the CREI order reduces their magnitude.

\begin{figure}[htbp]
\centering
\includegraphics[width=0.95\textwidth]{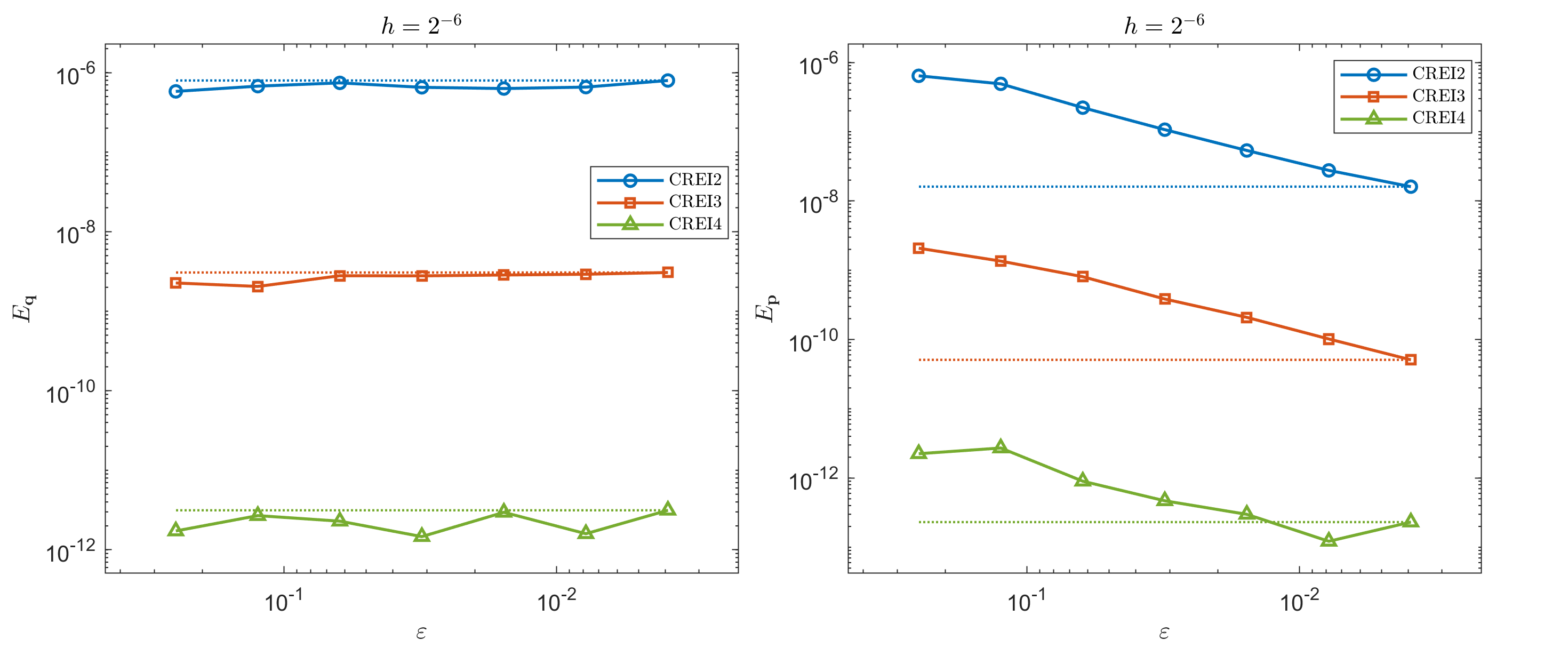}
\caption{Errors \(E_{\mathit{\bq}}\) and \(E_{\mathit{\bp}}\) versus \(\eps\) for fixed \(h=2^{-6}\). The dotted lines are horizontal references for the small-\(\eps\) error levels of CREI2, CREI3, and CREI4.}
\label{fig:fixed-h-eq}
\end{figure}

\begin{figure}[htbp]
\centering
\includegraphics[width=0.95\textwidth]{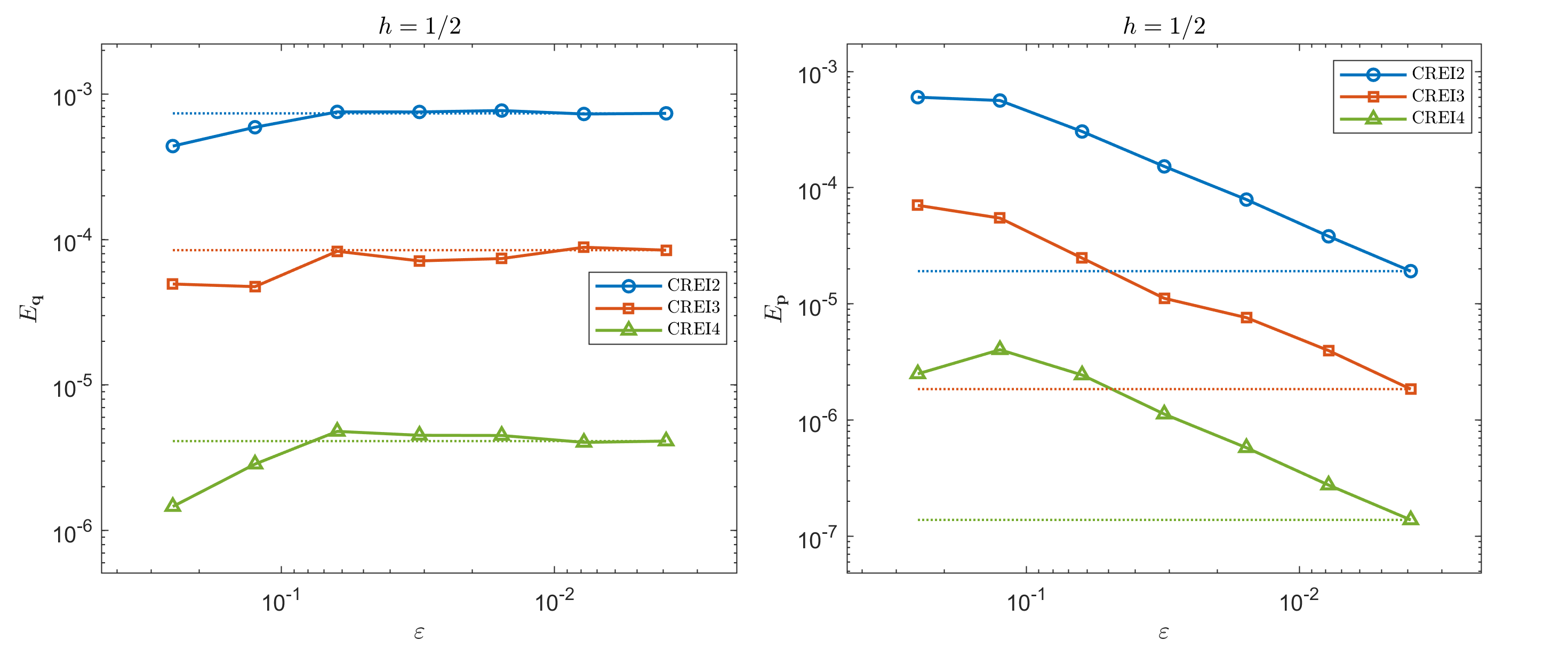}
\caption{Errors \(E_{\mathit{\bq}}\) and \(E_{\mathit{\bp}}\) versus \(\eps\) for fixed \(h=1/2\). The dotted lines are horizontal references for the small-\(\eps\) error levels of CREI2, CREI3, and CREI4.}
\label{fig:fixed-h-ep}
\end{figure}

\FloatBarrier

Finally, we compare CREI2 with the classical fourth-order Runge--Kutta method applied directly to the original \((\bx,\bv)\) system. We use \(h=2^{-7}\) and \(h=2^{-8}\) and vary \(\eps\). Figure \ref{fig:rk4-comp} shows that CREI2 remains substantially more uniform in \(\eps\), while RK4 deteriorates when the step does not resolve the fast rotation.

\begin{figure}[htbp]
\centering
\includegraphics[width=0.95\textwidth]{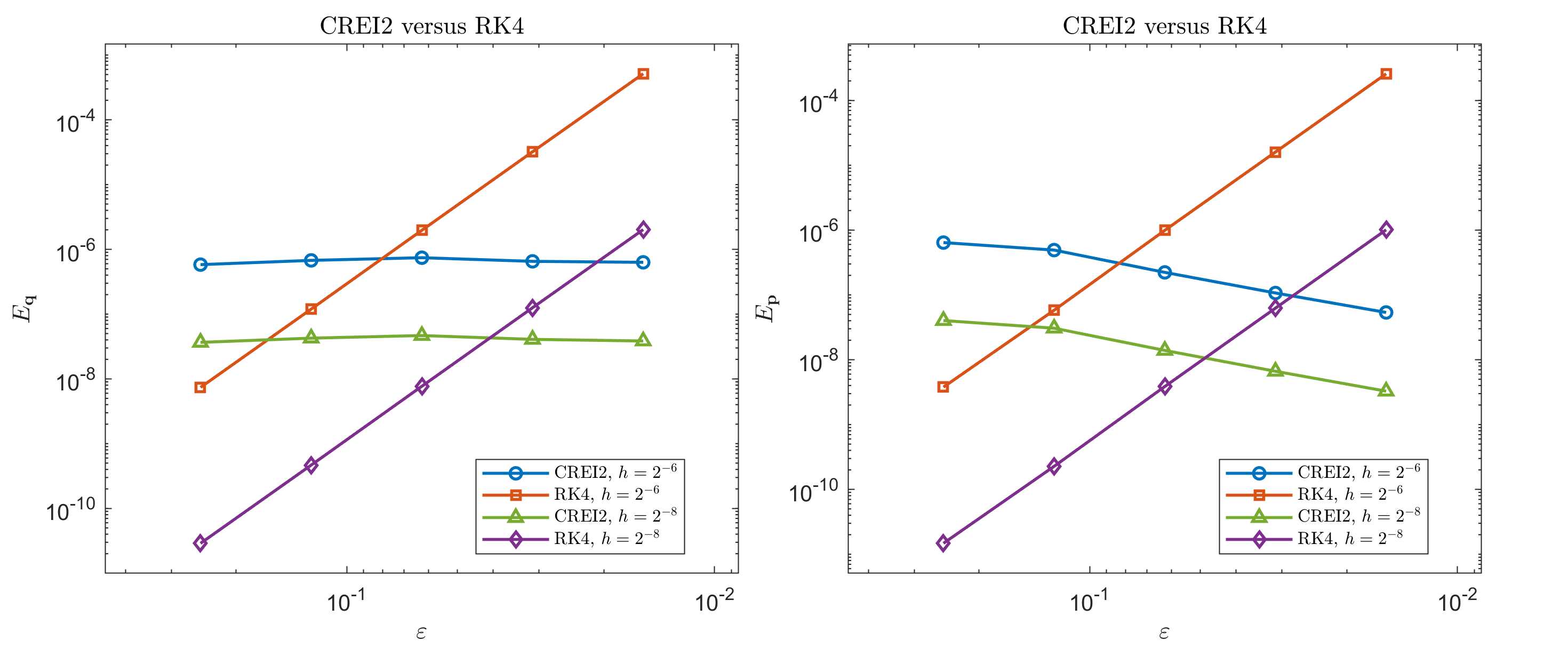}
\caption{Comparison between CREI2 and the classical RK4 method for two fixed step sizes. The left and right panels show \(E_{\mathit{\bq}}\) and \(E_{\mathit{\bp}}\), respectively.}
\label{fig:rk4-comp}
\end{figure}

\FloatBarrier

\section{Conclusion}\label{sec:conclusion}

For the three-dimensional nonrelativistic charged-particle model, the kernel-range transformation produces a semisimple zero block and a constant skew-symmetric oscillatory block. CREI removes the homogeneous rotation before Taylor expansion and applies the monomial extension to the remaining deviation. The resulting method has global error \(O(h^k)\) in the two parameter regimes considered above. The numerical experiments confirm the predicted order and the uniform behavior as \(\eps\) decreases.

\clearpage
\bibliographystyle{plain}
\bibliography{ref}

\end{document}